\let\expandafter\oldproof\csname\string\proof\endcsname
\let\oldendproof\endproof
\renewenvironment{proof}[1][\proofname]{%
\oldproof[\ttfamily \scshape \bf #1. ]%
}{\oldendproof}
 \def\ve{\varepsilon}    \def\emp{\emptyset}    \def \dist {{\rm dist}} \def\dom{{\rm dom}\,} \def\d{{\rm d}}     
    \def\O{{\cal O}}    \def\d{{\rm d}}  \def\Z{\mathbb{Z}}
       \def\ox{\overline{x}} 
\def\oq{\overline{q}} \def\oy{\overline{y}}      
\def\disp{\displaystyle}      
 \def\tto{\;{\lower 1pt \hbox{$\rightarrow$}}\kern -10pt
\hbox{\raise 2pt \hbox{$\rightarrow$}}\;}  \def\Vt{\vartheta} \def\Tilde{\widetilde}  \def\ra{\rangle} \def\la{\langle} \def\ve{\varepsilon} 
 \def\R{\mathbb{R}}  \def\X{\mathbb{X}} \def\Y{\mathbb{Y}} \def\ox{\bar{x}} \def\oy{\bar{y}}   
 \def\ou{\bar{u}}   \def\oq{\bar{q}}    
 \def\gph{\mbox{\rm gph}\,}   \def\dom{\mbox{\rm dom}\,}  
   \def\dn{\downarrow} \def\O{\Omega}
\def\ph{\varphi} \def\emp{\emptyset}   \def\lm{\lambda} \def\gg{\gamma} \def\dd{\delta}  \def\Th{\Theta}  
\def\vt{\vartheta}   \def\ss{\scriptsize }   
  \def\sce{\setcounter{equation}{0}}   
\begin{document}
\newtheorem{Theorem}{Theorem}[section]
\newtheorem{Proposition}[Theorem]{Proposition}
\newtheorem{Remark}[Theorem]{Remark}
\newtheorem{Lemma}[Theorem]{Lemma}
\newtheorem{Corollary}[Theorem]{Corollary}
\newtheorem{Definition}[Theorem]{Definition}
\newtheorem{Example}[Theorem]{Example}
\renewcommand{\theequation}{{\thesection}.\arabic{equation}}
\renewcommand{\thefootnote}{\fnsymbol{footnote}}

\begin{center} {\bf \Large Optimality Conditions for Variational Problems\\in Incomplete Functional Spaces}\\[2ex] ASHKAN MOHAMMADI\footnote{Department of Mathematics and Statistics, Georgetown University, Washington, DC 20007, USA (ashkan.mohammadi@georgetown.edu).} BORIS S. MORDUKHOVICH\footnote{Department of Mathematics, Wayne State University, Detroit, MI 48202, USA
(boris@math.wayne.edu). Research of this author was partly supported by the National Science Foundation under grants DMS-1512846 and DMS-1808978, by the USA Air Force Office of Scientific
Research under grant \#15RT04, and by the Australian Research Council under Discovery Project DP-190100555.}\\[2ex] {\bf Dedicated to Professor Franco Giannessi in the occasion of his 85th
birthday} \end{center} \vspace*{0.05in} \small{\bf Abstract.} This paper develops a novel approach to necessary optimality conditions for constrained variational problems defined in generally
incomplete subspaces of absolutely continuous functions. Our approach consists of reducing a variational problem to a (nondynamic) problem of constrained optimization in a normed space and then
applying the results recently obtained for the latter class by using generalized differentiation. In this way we derive necessary optimality conditions for nonconvex problems of the calculus of
variations with velocity constraints under the weakest metric subregularity-type constraint qualification. The developed approach leads us to a short and simple proof of first-order necessary
optimality conditions for such and related problems in broad spaces of functions including those of class ${\cal C}^k$ as $k\ge 1$.\\[1ex] {\bf Key Words.} Calculus of variations, constrained
optimization, optimal control, necessary optimality conditions, variational analysis, generalized differentiation\\[1ex] {\bf Mathematics Subject Classification (2000)} 49K24, 49J52, 49J53, 90C48

\normalsize
\section{Introduction}\label{intro}\sce

The classical calculus of variations primarily deals with minimizing integral functionals on classes of smooth curves that mainly belong to the spaces of continuously differentiable or twice
continuously differentiable functions; see the fundamental monographs by Bolza \cite{bolza}, Tonelli \cite{tonelli}, and Bliss \cite{bliss} as well as extensive further developments on the
subject. Although the aforementioned and other spaces used in the calculus of variations are {\em incomplete}, this does not create any obstacles in deriving necessary conditions for optimal
solutions to such problems due to the employed (Lagrangian) {\em method of variation}.

Modern variational analysis offers powerful techniques to derive necessary optimality conditions in problems of dynamic optimization including those in the calculus of variations and optimal
control. This machinery is based on advanced {\em variational principles} and {\em approximation/limiting procedures}, which are applied to general problems governed, in particular, by
differential inclusions where the method of variations and its modifications are not applicable. For various techniques in this vein we refer the reader to the books \cite{c,i,m06,s,v} with the
bibliographies and commentaries therein. However, there is a price to pay: all such methods unavoidably require the {\em completeness} of the space in question and thus cannot be applied to
optimization problems in spaces of smooth functions that have been traditionally considered in the calculus of variations.

This paper is devoted to developing a novel approach of variational analysis and generalized differentiation to derive necessary optimality conditions in constrained problems of dynamic
optimization over curves belonging to a {\em prescribed normed space} located between the collections of absolutely continuous and infinitely differentiable functions. The suggested approach is
based on the reduction of a given dynamic optimization problem to an infinite-dimensional nondynamic problem of {\em constrained optimization} for which necessary optimality conditions have been
recently obtained in our paper \cite{mm} in arbitrary normed spaces under weak constraint qualifications.

Although this approach works in more general frameworks of constrained {\em dynamic optimization}, for simplicity we concentrate here on an {\em extended Bolza problem} of the calculus of
variations considered in the aforementioned (generally incomplete) spaces of curves subject to endpoint and {\em hard/pointwise constraints on velocity} functions that depend on the current state
position. Pointwise velocity constrains have been recognized as the most challenging ones in the calculus of variations. Even in a modern setting with the usage of an advanced variational
technique largely different from the method of variations, the necessary optimality conditions for strong local minimizers in problems of the calculus of variations with pure velocity constraints
in the complete space of absolutely continuous functions are obtained under the restrictive ``Interiority Hypothesis" in the most recent Clarke's book \cite[Theorem~18.1]{c}.

The {\em reduction method} developed in this paper allows us to represent the original variational problem in an equivalent form of nondynamic infinite-dimensional constrained optimization and
then apply the necessary optimality conditions to the latter problem established in \cite{mm}. In this way we present a rather simple derivation of necessary optimality conditions for strong
local (in a generalized sense) minimizers of the extended Bolza problem under consideration defined in generally incomplete subspaces of absolutely continuous functions. The obtained necessary
optimality conditions consist of the {\em Euler-Lagrange equation}, the {\em Weierstrass-Pontryagin maximization condition}, and the {\em transversality inclusion} in the {\em
qualified/normal/KKT form} established under the weakest constraint qualification of the {\em metric subregularity} type.\vspace*{0.05in}

The rest of the paper is organized as follows. Section~\ref{prel} contains the required definitions and preliminaries from variational analysis and generalized differentiation used in the
formulations and proofs of the subsequent results. We present here the underlying theorem from \cite{mm} giving us necessary optimality conditions for infinite-dimensional constrained
optimization problems to which we reduce the extended Bolza problem of our study.

Section~\ref{main} starts with the formulation and discussion of this extended version of the Bolza problem with endpoint and pointwise velocity constraints. Then we formulate the aforementioned
necessary optimality conditions for the extended Bolza problem that are proved in the remaining part of the paper by the reduction to constrained optimization.

All the reduction steps are furnished in Section~\ref{conic}, which is the most technical part of the paper while containing results of their own interest. Using this reduction and the obtained
optimality conditions in nondynamic constrained optimization, we complete in Section~\ref{sec:proof} the derivation of the necessary optimality conditions for the extended Bolza problem that are
formulated in Section~\ref{main}. Section~\ref{concl} summarizes the main achievements of the paper and discusses some topics of our future research.

\section{Basic Definitions and Preliminaries}\label{prel}\sce

First we recall some standard notation of variational analysis used in the paper. Unless otherwise stated, $\X$ and $\Y$ stand for {\em normed spaces} with the generic symbol $\|\cdot\|$ for
norms and $\la\cdot,\cdot\ra$ for scalar products between the spaces in question and their topological duals.

Given an extended-real-valued function $\ph\colon\X\to(-\infty,\infty]$ with the domain $\dom\ph:=\{x\in\X\;|\;\ph(x)<\infty\}$, the (Dini-Hadamard) {\em subderivative} of $\ph$ at
$\ox\in\dom\ph$ is the function ${\mathrm d}\ph(\ox)\colon\X\to[-\infty,\infty]$ defined by \begin{equation}\label{subder} {\mathrm d}\ph(\ox)(\ou):=\liminf_{\substack{t\dn
0\\u\to\ou}}{\frac{\ph(\ox+tu)-\ph(\ox)}{t}},\quad\ou\in\X,\end{equation} where the limit $u\to\ou$ in \eqref{subder} can be equivalently omitted if $\ph$ is locally Lipschitzian around $\ox$.
The latter form reduces to the {\em directional derivative} of $\ph$ in the direction $\ou$ provided that the full limit in \eqref{subder} exists. It is well known that the {\em G\^ateaux
differentiability} of $\ph$ at an interior point $\ox$ of the domain corresponds to the existence of the directional derivative in any direction and its linearity with respect to the direction
variable.

Turning next to sets, we associate with any nonempty subset $\O\subset\X$ the {\em indicator function} $\dd_\O(x)$ of $\O$ that equals $0$ if $x\in\O$ and $\infty$ otherwise, and the {\em
distance function} $\dist(x;\O)$ of $\O$ defined as usual by \begin{equation*}{\rm dist}(x;\O):=\inf\big\{\|x-u\|\;\big|\;u\in\O\big\}.\end{equation*}  The latter function is Lipschitz continuous
on $\X$ with Lipschitz constant $\ell=1$. Since the main goal of this paper is to illuminate the suggested approach to deriving necessary optimality conditions in problems of dynamic optimization
by reducing them to nondynamic constrained optimization without much of technical complications, we are not going to involve here tangent and normal cone constructions for nonconvex sets. The
only {\em normal cone} used in what follows is the classical one for convex sets $\O\subset\X$ defined by \begin{equation}\label{nc} N_\O(\ox):=\big\{x^*\in\X^*\;\big|\;\la x^*,x-\ox\ra\le
0\;\mbox{ for all }\;x\in\O\big\}\end{equation} if $\ox\in\O$ with $N_\O(\ox):=\emp$ otherwise. The set of normals in \eqref{nc} is obviously convex and closed in the weak$^*$ topology of the
dual space $\X^*$.

Considering further a set-valued mapping $F\colon\X\tto\Y$ with the graph $\gph F:=\{(x,y)\in\X\times\Y\;|\;y\in F(x)\}$, recall that $F$ is {\em metrically regular} around $(\ox,\oy)\in\gph F$
if there exist a constant $\kappa>0$ and neighborhoods $U$ of $\ox$ and $V$ of $\oy$ such that we have \begin{equation}\label{metreq}\dist \big(x ; F^{-1}(y)\big)\le\kappa\,\dist
\big(y;F(x)\big)\;\mbox{ for all }\;(x,y)\in U\times V.\end{equation} If $y=\oy$ in \eqref{metreq}, the mapping $F$ is called to be {\em metrically subregular} at $(\ox,\oy)$. The reader is
referred to the books \cite{i,m06,rw} for more information about these and equivalent properties of set-valued mappings with their broad applications in variational analysis.\vspace*{0.05in}

Now we formulate a class of (nondynamic) constrained optimization problems, which was studied in our previous paper \cite{mm} with deriving various types of primal and dual necessary optimality
conditions for their local minimizers. Given a cost function $J\colon\X\to\R$, a constraint mapping $f\colon\X\to\Y$ between arbitrary normed spaces, and a constraint set $\Th\subset\Y$, the
basic {\em constrained optimization problem} is defined as follows: \begin{equation}\label{op} \mbox{minimize }\;J(x)\;\mbox{ subject to }\;f(x)\in\Theta\end{equation} with the set of {\em
feasible solutions} denoted by $\O:=\{x\in\X\;|\;f(x)\in\Theta\}$. Among the necessary optimality conditions obtained for \eqref{op} in \cite{mm}, we select the dual one established in the
refined KKT form under the following constraint qualification.

\begin{Definition}[\textbf{metric subregularity constraint qualification}]\label{defmscq} Let $\ox$ be a feasible solution \eqref{op}. Then we say that the {\sc metric subregularity constraint
qualification} $($MSCQ$)$ holds at $\bar x$ if the set-valued mapping $x\mapsto f(x)-\Theta$ is metrically subregular at $(\bar x,0)$, i.e., there exists a constant $\kappa>0$ and a neighborhood
$U$ of $\ox$ such that \begin{equation}\label{mscq} \dist(x;\O)\le\kappa\;\dist\big(f(x);\Theta\big)\;\mbox{ for all }\;x\in U. \end{equation}\end{Definition}

Note that the replacement of the metric subregularity of the mapping $x\mapsto f(x)-\Theta$ at $(\ox,0)$ in Definition~\ref{defmscq} by the {\em metric regularity} of this mapping around the pair
$(\ox,0)$ brings us to a significantly more restrictive constraint qualification, which reduces to the well-known ones for particular classes of optimization problems (e.g., the {\em
Mangasarian-Fromovitz constraint qualification} in nonlinear programming, the {\em Robinson constraint qualification} in conic programming, etc.). This follows from applying the {\em Mordukhovich
coderivative criterion} to the mapping $f-\Th$ around $(\ox,0)$ for sets $\Th$ that appear in particular constraint systems; see \cite{m93,m06,rw}. Regarding the more subtle MSCQ, its
relationships with other constraint qualifications, and various applications, we refer the reader to, e.g., \cite{g,gm,go,mm,mms} with the additional details and discussions.\vspace*{0.05in}

Finally in this section, we present the necessary optimality conditions for the constrained problem \eqref{op} in normed spaces used in what follows. Note that the following theorem is a special
case of \cite[Theorem~7.3]{mm}, where the optimality conditions are established under more general assumptions. However, we confine ourselves to the ones below to simplify the subsequent
derivation of necessary conditions for the extended Bolza problem formulated in the next section. Recall that $A^*$ indicates the adjoint operator of the linear operator $A$, which reduces to the
matrix transposition in finite dimensions.

\begin{Theorem}[\textbf{necessary conditions for constrained optimization}]\label{dualnopc} Let $\X$ and $\Y$ be arbitrary normed spaces, and let $\ox\in\O$ be a local $($in the norm of the space
$\X)$ minimizer of problem \eqref{op}, where $\Theta\subset\Y$ is convex and locally closed around $f(\ox)$, where $J\colon\X\to\R$ is G\^ateaux differentiable at $\ox$ and locally Lipschitzian
around this point with Lipschitz constant $\ell>0$, and where $f\colon\X\to\Y$ is continuously Fr\'echet differentiable around $\ox$. Assume in addition that MSCQ \eqref{mscq} holds at $\ox$ with
some constant $\kappa>0$. Then we have the following necessary optimality conditions: \begin{equation}\label{necessaryop}\mbox{there exists }\;\lambda\in\Y^*\;\mbox{ such that
}\;\left\{\begin{matrix} 0 = \nabla J(\ox) + \nabla f(\ox)^*\lambda, \\\\ \lambda \in N_{\Theta}\big(f(\ox)\big),\;\| \lambda\|\leq \kappa \ell, \\ \end{matrix}\right. \end{equation} where the
same symbol $\nabla$ is used for both G\^ateaux and Fr\'echet derivatives.\end{Theorem}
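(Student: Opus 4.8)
The plan is to bypass the cited general theorem and give a self-contained derivation that combines an \emph{exact penalization} driven by MSCQ with the subdifferential calculus of the distance function to the convex set $\Theta$, finishing with a weak$^*$ separation argument to produce the multiplier. First I would exploit the Lipschitz continuity of $J$ together with the estimate \eqref{mscq} to show that $\ox$ is an \emph{unconstrained} local minimizer of the penalized functional
\[
\psi(x):=J(x)+\kappa\ell\,\dist\big(f(x);\Theta\big).
\]
Indeed, for $x$ near $\ox$ one picks $u\in\O$ nearly realizing $\dist(x;\O)$; since $\ox\in\O$ forces $\dist(x;\O)\le\|x-\ox\|$, such $u$ lies near $\ox$, so $J(u)\ge J(\ox)$ by local optimality. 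The Lipschitz bound $J(x)\ge J(u)-\ell\|x-u\|$ combined with \eqref{mscq} then yields $J(x)\ge J(\ox)-\kappa\ell\,\dist(f(x);\Theta)$, that is, $\psi(x)\ge\psi(\ox)$.

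Because $\psi$ is locally Lipschitzian and is locally minimized at $\ox$ without constraints, its subderivative is nonnegative in every direction: ${\mathrm d}\psi(\ox)(u)\ge 0$ for all $u\in\X$. The next step is to evaluate this subderivative. Writing $g:=\dist(\cdot;\Theta)$, which is convex and $1$-Lipschitz because $\Theta$ is convex and locally closed around $f(\ox)$, the \F\ differentiability of $f$ gives the chain rule $g'(f(\ox);\nabla f(\ox)u)$ for the directional derivative of $g\circ f$, while the \G\ differentiability of $J$ contributes $\la\nabla J(\ox),u\ra$. Invoking the classical formula $\sub g(f(\ox))=N_\Theta(f(\ox))\cap\B_{\Y^*}$ for the convex subdifferential of the distance function at a point of $\Theta$ (with $\B_{\Y^*}$ the closed dual unit ball), and using that $g'(f(\ox);\cdot)$ is the support function of $\sub g(f(\ox))$, the optimality inequality rewrites as
\[
\la\nabla J(\ox),u\ra+\max\big\{\la y^*,\nabla f(\ox)u\ra\ \big|\ y^*\in K\big\}\ge 0\quad\text{for all }u\in\X,
\]
where $K:=N_\Theta(f(\ox))\cap(\kappa\ell\,\B_{\Y^*})$, using that $N_\Theta(f(\ox))$ is a cone.

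Rearranging, the displayed inequality becomes $\max_{y^*\in K}\la\nabla f(\ox)^*y^*,u\ra\ge\la-\nabla J(\ox),u\ra$ for every $u\in\X$, so it remains to conclude that $-\nabla J(\ox)\in\nabla f(\ox)^*K$; indeed any $\lambda\in K$ with $\nabla f(\ox)^*\lambda=-\nabla J(\ox)$ satisfies $\lambda\in N_\Theta(f(\ox))$, $\|\lambda\|\le\kappa\ell$, and $\nabla J(\ox)+\nabla f(\ox)^*\lambda=0$, which is exactly \eqref{necessaryop}. By Banach--Alaoglu the set $K$ is bounded and weak$^*$ closed, hence weak$^*$ compact and convex, and since $\nabla f(\ox)^*$ is weak$^*$-to-weak$^*$ continuous, the image $\nabla f(\ox)^*K$ is weak$^*$ compact and convex in $\X^*$. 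If $-\nabla J(\ox)$ lay outside this set, Hahn--Banach separation in $(\X^*,w^*)$---whose topological dual is the predual $\X$---would furnish some $u\in\X$ strictly separating the point from $\nabla f(\ox)^*K$, contradicting the inequality above.

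I expect the separation step to be the main obstacle, and it is here that the normed (possibly incomplete) setting matters. It is crucial that the separating functional come from $\X$ itself rather than merely from $\X^{**}$, which is precisely why one must work in the weak$^*$ topology and therefore must guarantee the weak$^*$ closedness of $\nabla f(\ox)^*K$; this closedness is available only because the bound $\|\lambda\|\le\kappa\ell$ renders $K$ weak$^*$ compact, so the boundedness coming from MSCQ and the Lipschitz constant is not a cosmetic feature but the very hypothesis that makes the argument close. A secondary delicate point is the justification of the directional-derivative chain rule for $g\circ f$ and the subdifferential formula $\sub g=N_\Theta\cap\B_{\Y^*}$ directly from the $1$-Lipschitz convexity of the distance function, without appeal to any completeness of $\X$ or $\Y$.
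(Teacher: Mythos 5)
Your argument is correct, but it cannot be compared line-by-line with a proof in the paper because the paper offers none: Theorem~\ref{dualnopc} is imported verbatim as a special case of \cite[Theorem~7.3]{mm}, so what you have produced is a self-contained substitute for that citation rather than a variant of an in-text argument. Your route --- exact penalization of $J$ by $\kappa\ell\,\dist(f(\cdot);\Theta)$ using \eqref{mscq}, the identity $\sub\,\dist(\cdot;\Theta)(f(\ox))=N_\Theta(f(\ox))\cap\B_{\Y^*}$ together with the max formula for the directional derivative of a continuous convex function, and a weak$^*$ separation to extract $\lambda$ --- is sound in arbitrary normed spaces, and each ingredient you invoke genuinely avoids completeness: Banach--Alaoglu and the max formula rest only on Hahn--Banach/Tychonoff, the distance-subdifferential formula needs only convexity of $\Theta$ (not closedness, since $\dist(\cdot;\Theta)=\dist(\cdot;\cl\Theta)$ and the normal cones agree at points of $\Theta$), and the penalization with the exact constant $\kappa\ell$ goes through by taking near-minimizers $u$ of $\dist(x;\O)$ and letting the slack tend to zero. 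You also correctly identify the load-bearing role of the bound $\|\lambda\|\le\kappa\ell$: without it $\nabla f(\ox)^*N_\Theta(f(\ox))$ need not be weak$^*$ closed and the separation step would fail, which is precisely the quantitative feature the paper emphasizes in Remark~\ref{quant}. What the citation to \cite{mm} buys that your proof does not is generality --- there $J$ need not be G\^ateaux differentiable and $\Theta$ need not be convex, so subderivatives and more elaborate dual constructions replace your convex max formula --- whereas your derivation buys transparency and makes the origin of the constant $\kappa\ell$ visible. Two small points you should write out explicitly if this is to stand alone: the attainment of the maximum in $g'(f(\ox);d)=\max\{\la y^*,d\ra \mid y^*\in\sub g(f(\ox))\}$ (Hahn--Banach extension of a linear functional dominated by the sublinear, Lipschitz-bounded $g'(f(\ox);\cdot)$), and the justification that the chain rule for $g\circ f$ only needs the $1$-Lipschitz continuity of $g$ to absorb the $o(t)$ error from the Fr\'echet expansion of $f$.
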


Note that if $J$ is also continuously Fr\'echet differentiable around $\ox$, then we can set $\ell:=\|\nabla J(\ox)\|$; see \cite[Corollary~7.5]{mm}.


\section{Extended Bolza Problem with Velocity Constraints}\label{main}\sce

In this section we define a constrained variational problem written in an extended form of the Bolza problem of the calculus of variations, while in the presence of pointwise velocity constraints
depending on the current curve position. Feasible curves in this problem belong to a prescribed generally incomplete subspace of functions situated between the spaces of infinite differentiable
and absolutely continuous ones.

To formulate the problem of our study, consider the terminal cost $\ph\colon\R^n\times\R^n\to\R$, the running cost $\vt\colon[0,T]\times\R^n\times\R^n\to\R$, the constraint mappings
$g\colon[0,T]\times\R^n\to\R^n$, the dynamic constraint set $\Omega_1\subset\R^n$, and the endpoint constraint set $\O_2\subset\R^n\times\R^n$, where the time $T>0$ is fixed. Let $\X$ be an
arbitrary normed space such that \begin{equation}\label{space} {\cal C}^\infty([0,T];\R^n)\subset\X\subset AC([0,T];\R^n),\end{equation} where ${\cal C}^\infty([0,T];\R^n)$ stands for the
standard space of functions $x\colon[0,T]\to\R^n$ infinite differentiable on $[0,T]$, and where $AC([0,T];\R^n)$ indicates the space of all functions $x(\cdot)$ that are absolutely continuous on
$[0,T]$ with the norm \begin{equation}\label{acnorm}\| x(\cdot)\|_{ac}:= \| x(0) \| + \int_{0}^{T}  \| \dot{x} (t) \|\; dt. \end{equation} Both inclusions in \eqref{space} can be nonstrict, i.e.,
the extreme cases of $\X={\cal C}([0,T];\R^n)$ and $\X=AC([0,T];\R^n)$ are also acceptable. Unless otherwise stated, the norm on the space $\X$ is given by \eqref{acnorm}. In particular, the
choice of $\X$ in \eqref{space} includes the incomplete spaces ${\cal C}^k([0,T];\R^n)$ of $k$-times differentiable vector functions with $k=1,2,\ldots$, which are typically encountered in the
classical calculus of variations. Our basic {\em extended Bolza problem} is formulated as follows: \begin{eqnarray}\label{bolzap} \disp &\mbox{minimize}&
J\big(x(\cdot)\big):=\quad\ph\big(x(0),x(T)\big)+\int_{0}^{T}\Vt\big(t,x(t),\dot{x}(t)\big)\,dt\;\mbox{ over }\;x(\cdot)\in\X\nonumber \\&\mbox{subject to}&\dot{x}(t) + g\big(t,x(t)\big)\in\O_1
\;\mbox{ a.e. } \; t \in [0,T],\quad\big(x(0),x(T)\big)\in\O_2. \disp \end{eqnarray}

We say as usual that $x(\cdot)\in\X$ is a {\em feasible solution} to
problem \eqref{bolzap} if $x(\cdot)$ satisfies all the constraints
in this problem and gives a finite value of the cost functional
therein. The set of feasible solutions to \eqref{bolzap} is denoted
by ${\cal S}$. A feasible solution $\ox=\ox(\cdot)$ is said to be an
{\em $\X$-strong local minimizer} of \eqref{bolzap} if there exists
$\ve>0$ such that \begin{equation}\label{st-min}
J(\ox(\cdot)\big)\le J\big(x(\cdot)\big)\;\mbox{ for any
}\;x(\cdot)\in{\cal S}\;\mbox{ with
}\;\|x(\cdot)-\ox(\cdot)\|_{ac}<\ve. \end{equation} Note that the
notion of $\X$-strong local minimizers defined in \eqref{st-min} is
different (even for $\X=AC([0,T];\R^n)$) from the standard notion of
strong minimizers in the calculus of variations, where the
$\X$-closeness in \eqref{st-min} is replaced by the closeness in the
uniform topology of the space ${\cal C}([0,T];\R^n)$; cf.\
\cite{bliss,bolza,c}. In fact, $\X$-strong local minimizers occupy a
(proper) {\em intermediate} position between weak and strong
minimizers of the calculus of variations; see
\cite[Section~6.1]{m06} for more discussions, examples, and
references.\vspace*{0.05in}

As mentioned above, our intention is to reduce the extended Bolza problem of dynamic optimization \eqref{bolzap} in the normed space $\X$ to the nondynamic one \eqref{op} in a suitable
functional space in order to apply to the latter the necessary optimality conditions established in Theorem~\ref{dualnopc}. To proceed in this way, we have to formulate appropriate assumptions on
the given data of \eqref{bolzap} that ensure the fulfillment of the required assumptions of Theorem~\ref{dualnopc} for the reduced constrained optimization problem \eqref{op}. Let us impose the
following assumptions on the initial data $\ph$, $\vt$, $g$, $\Omega_1$, and $\Omega_2$ of \eqref{bolzap} around the reference optimal solution ($\X$-strong local minimizer) $\ox=\ox(\cdot)\in\X$
of this problem. \\[1ex] {\bf(H1)} The terminal cost $\ph(x_0,x_T)$ is continuously differentiable around $(\ox(0),\ox(T))$.\\[1ex] {\bf(H2)} The running cost $\vt(t,x,v)$ is measurable in $t$ on
$[0,T]$, continuously differentiable in $(x,v)$ around $(\ox(t),\dot\ox(t))$ for a.e.\ $t\in[0,T]$, and locally Lipschitzian with respect to $(x,v)$ around $(\ox(\cdot),\dot\ox(\cdot))$ in the
$\X$-norm \eqref{acnorm} with a summable Lipschitz modulus $\ell(t)$ on $[0,T]$. This means that there exists $\ve>0$ such that for all $x_1(\cdot),x_2(\cdot)\in\X$ near $\ox(t)$ we have
\begin{equation*} \big\|\vt\big(t,x_1(t),\dot x_1(t)\big)-\vt\big(t,x_2(t),\dot x_2(t)\big)\big\|\le\ell(t)\big(\|x_1(t)-x_2(t)\|+\|\dot x_1(t)-\dot x_2(t)\|\big)\;\mbox{ a.e. }\;t\in[0,T]
\end{equation*} provided that $\|x_1(\cdot)-x_2(\cdot)\|_{ac}\le\ve$. For simplicity we suppose that $\ell(t)\equiv\ell$ on $[0,T]$.\\[1ex]  {\bf(H3)} The constraint sets $\O_1$ and $\O_2$ are
convex and locally closed around $\ox(\cdot)$ in the spaces $\R^n$ and $\R^n\times\R^n$, respectively. This means that there exist closed balls around $\dot{\ox}(t)+g(t,\ox(t))$ for a.e.\
$t\in[0,T]$ and $(\ox(0),\ox(T))$ in the corresponding finite-dimensional spaces such that the intersections of these balls with $\O_1$ and $\O_2$ are closed.\\[1ex] {\bf(H4)} The constraint
mapping $g(t,x)$ is continuously differentiable in $x$ and measurable in $t$ together with its derivative $\nabla_x g(t,x)$. Furthermore, both $g(t,x)$ and $\nabla_x g(t,x)$ are essentially
bounded on $[0,T]$ for all $x(\cdot)$ around $\ox(\cdot)$, where the localization is understood similarly to the description in (H3).\\[1ex] {\bf(H5)} There exists a constant $\kappa>0$ such that
for all $x(\cdot)\in\X$ sufficiently close to $\ox(\cdot)$ we have the constraint qualification \begin{equation}\label{bolzamcq} \dist\big(x(\cdot);{\cal S}\big)\le\kappa\int_{0}^{T}
\dist\big(\dot{x}(t)+g(t,x(t));\O_1\big)\,dt+\kappa\,\dist\big((x(0),x(T));\O_2\big).\end{equation}

As observed by one of the referees, in the case where
$\O_2=\O_0\times\R^n$, assumption (H5) follows from the
Filippov-Gronwall inequality; see, e.g., \cite[Proposition~1 of
Chapter 2]{aubin}. Furthermore, we'll see below that the local
Lipschitz continuity assumption on the running cost in the $\X$-norm
imposed in (H2), which is generally different from the standard
local Lipschitz continuity of $\vt(t,x,v)$ in
$(x,v)\in\R^n\times\R^n$, allows us to deal with $\X$-strong local
minimizers $\ox(\cdot)\in\X$ of the extended Bolza problem
\eqref{bolzap}.\vspace*{0.05in}

Here is the formulation of necessary optimality conditions for strong local minimizers of \eqref{bolzap}, which are proved in the next two sections.

\begin{Theorem}[\textbf{necessary optimality conditions for the extended Bolza problem}]\label{bolza} Let $\ox(\cdot)\in\X$ be an $\X$-strong local minimizer \eqref{st-min} of the extended Bolza
problem \eqref{bolzap} under the fulfillment of the assumptions {\rm(H1)}--{\rm(H5)} around $\ox(\cdot)$. Then there exists an adjoint arc $p(\cdot)\in AC([0,T];\R^n)$ for which the following
conditions are satisfied:\\[1ex] The {\sc Euler-Lagrange equation} for a.e. $t\in[0,T]$: \begin{equation}\label{euler-lag}\dot{p}(t)-\nabla_{x}
g(t,\ox(t))^*p(t)=\nabla_x\Vt\big(t,\ox(t),\dot{\ox}(t)\big)- \nabla_{x}g\big(t,\ox(t)\big)^*\nabla_v\Vt\big(t,\ox(t),\dot{\ox}(t)\big).\end{equation} The {\sc Weierstrass-Pontryagin maximization
condition} for a.e. $t\in[0,T]$:\begin{equation}\label{maxp}\big\la p(t)-\nabla_v\Vt\big(t,\ox(t),\dot{\ox}(t)\big),\dot{\ox}(t)+g\big(t,\ox(t)\big)\big\ra=\max_{w\in\O_1}\big\la
p(t)-\nabla_v\Vt\big(t,\ox(t), \dot{\ox}(t)\big),w\big\ra. \end{equation} The {\sc transversality inclusion}:
\begin{equation}\label{transv}\big(p(0),-p(T)\big)\in\nabla\ph\big(\ox(0),\ox(T)\big)+N_{\O_2}\big(\ox(0),\ox(T)\big). \end{equation} \end{Theorem}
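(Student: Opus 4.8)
The plan is to realize the extended Bolza problem \eqref{bolzap} as an instance of the abstract constrained problem \eqref{op} and then read off \eqref{euler-lag}--\eqref{transv} directly from the conclusion \eqref{necessaryop} of Theorem~\ref{dualnopc}. To this end I take the image space $\Y:=L^1([0,T];\R^n)\times\R^n\times\R^n$, equipped with the sum of the $L^1$-norm and the Euclidean norms, and define the constraint mapping $f\colon\X\to\Y$ together with the constraint set by
\[
f\big(x(\cdot)\big):=\Big(\dot x(\cdot)+g\big(\cdot,x(\cdot)\big),\,x(0),\,x(T)\Big),\qquad\Theta:={\cal A}\times\O_2,
\]
where ${\cal A}:=\{y(\cdot)\in L^1([0,T];\R^n)\mid y(t)\in\O_1\ \text{a.e.}\}$. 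With these choices the feasible set $\{x\mid f(x)\in\Theta\}$ of \eqref{op} coincides with $\S$ and the cost $J$ is literally the same in both problems, so $\ox(\cdot)$ is a local (in the norm $\|\cdot\|_{ac}$) minimizer of the reduced problem \eqref{op}. A key point is that the decomposable structure of ${\cal A}$ gives $\dist\big(y(\cdot);{\cal A}\big)=\int_0^T\dist\big(y(t);\O_1\big)\,dt$ in the $L^1$-norm, whence the MSCQ \eqref{mscq} for $x\mapsto f(x)-\Theta$ at $(\ox,0)$ is \emph{exactly} the constraint qualification \eqref{bolzamcq} assumed in (H5).

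The technical heart of the argument, which I expect to be the main obstacle, is to verify that the reduced data satisfy the hypotheses of Theorem~\ref{dualnopc} in the possibly \emph{incomplete} space $(\X,\|\cdot\|_{ac})$. Assumption (H3) yields convexity and local closedness of $\Theta$, since ${\cal A}$ inherits these from $\O_1$. Using that $x\mapsto\dot x$ is bounded linear from $(\X,\|\cdot\|_{ac})$ into $L^1$ and $x\mapsto(x(0),x(T))$ is bounded linear into $\R^n\times\R^n$, one checks under (H4) that the superposition operator $x\mapsto g(\cdot,x(\cdot))$ is continuously Fr\'echet differentiable into $L^1$, the essential boundedness of $g$ and $\nabla_x g$ being what makes these maps well defined and $C^1$; hence $f$ is continuously Fr\'echet differentiable around $\ox$ with
\[
\nabla f(\ox)h=\Big(\dot h(\cdot)+\nabla_x g\big(\cdot,\ox(\cdot)\big)h(\cdot),\,h(0),\,h(T)\Big).
\]
The most delicate verification concerns $J$: assumptions (H1)--(H2), and in particular the local Lipschitz continuity of the running cost measured in the $\X$-norm \eqref{acnorm} with summable modulus, allow differentiation under the integral sign via dominated convergence and yield that $J$ is G\^ateaux differentiable at $\ox$ and locally Lipschitzian around $\ox$ with constant $\ell$, with
\[
\la\nabla J(\ox),h\ra=\la\nabla\ph,(h(0),h(T))\ra+\int_0^T\big[\la\nabla_x\vt,h(t)\ra+\la\nabla_v\vt,\dot h(t)\ra\big]\,dt,
\]
where the partials of $\vt$ are taken at $(t,\ox(t),\dot\ox(t))$ and $\nabla\ph$ at $(\ox(0),\ox(T))$. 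It is precisely this $\X$-norm Lipschitz property that lets us handle $\X$-strong minimizers \eqref{st-min} rather than merely weak ones.

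With the hypotheses in force, Theorem~\ref{dualnopc} supplies a multiplier $\lambda\in\Y^*$ with $\|\lambda\|\le\kappa\ell$. Identifying $\Y^*=L^\infty([0,T];\R^n)\times\R^n\times\R^n$, I write $\lambda=(q(\cdot),a,b)$. The inclusion $\lambda\in N_\Theta(f(\ox))$ splits along the product: the endpoint part gives $(a,b)\in N_{\O_2}(\ox(0),\ox(T))$, while the pointwise (decomposable) description of the normal cone to ${\cal A}$ in the $L^1$/$L^\infty$ duality gives $q(t)\in N_{\O_1}\big(\dot\ox(t)+g(t,\ox(t))\big)$ a.e. Defining the adjoint quantity $p(t):=q(t)+\nabla_v\vt(t,\ox(t),\dot\ox(t))$, this normal-cone membership of $q(t)$ is, by definition \eqref{nc}, exactly the Weierstrass--Pontryagin maximization condition \eqref{maxp}.

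It remains to extract \eqref{euler-lag} and \eqref{transv} from the stationarity equation $0=\nabla J(\ox)+\nabla f(\ox)^*\lambda$ holding in $\X^*$, which, since ${\cal C}^\infty\subset\X$, may be tested against all smooth $h$. Pairing and collecting the coefficients of $h$ and $\dot h$ turns this identity into
\[
0=\int_0^T\big[\la R(t),h(t)\ra+\la p(t),\dot h(t)\ra\big]\,dt+\la\nabla\ph+(a,b),(h(0),h(T))\ra,\quad R:=\nabla_x\vt+\nabla_x g(\cdot,\ox)^*q\in L^1.
\]
Restricting first to $h$ vanishing at both endpoints and invoking the du~Bois-Reymond lemma shows that $p(\cdot)$ agrees a.e.\ with an absolutely continuous arc satisfying $\dot p(t)=R(t)$; substituting $q(t)=p(t)-\nabla_v\vt(t,\ox,\dot\ox)$ rearranges $\dot p=R$ into the Euler--Lagrange equation \eqref{euler-lag} and simultaneously certifies $p(\cdot)\in AC([0,T];\R^n)$. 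Finally, integrating by parts in the displayed identity for arbitrary smooth $h$ collapses the integral to the boundary term $\la p(T),h(T)\ra-\la p(0),h(0)\ra$, and matching the free values $h(0),h(T)$ yields $(p(0),-p(T))=\nabla\ph(\ox(0),\ox(T))+(a,b)$, i.e.\ the transversality inclusion \eqref{transv} upon recalling $(a,b)\in N_{\O_2}(\ox(0),\ox(T))$.
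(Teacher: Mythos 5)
Your proposal is correct and follows essentially the same route as the paper: the same reduction of \eqref{bolzap} to \eqref{op} with the data \eqref{dataoc}, the same verification of the hypotheses of Theorem~\ref{dualnopc} (including the decomposability identities for the distance to and the normal cone of the set $\Th_1$, which the paper proves as separate statements), and the same extraction of \eqref{euler-lag}--\eqref{transv} from the stationarity equation by testing against smooth variations. Your combination of the du~Bois-Reymond lemma with an integration by parts for the boundary terms is exactly the content of the paper's ``extended fundamental lemma'' (Lemma~\ref{flemma}), so the arguments coincide.
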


Observe that if $g\equiv0$ in \eqref{bolzap}, i.e., we have the problem of Bolza with pure velocity constraints and if $\X=AC([0,T];\R^n)$, then the maximization condition \eqref{maxp} reduces to
the Weierstrass condition obtained \cite[Theorem~18.1]{c} under the ``Interiority Hypothesis" that is much more restrictive than the qualification condition \eqref{bolzamcq}. On the other hand,
condition \eqref{maxp} corresponds to the extensions of the Pontryagin maximum principle \cite{pont} to variational problems governed by differential inclusions $\dot x\in F(t,x)$ with
$F(t,x):=\O_1-g(t,x)$; see, e.g., \cite{c,i,m06,s,v} for various results and proofs in complete spaces of functions. Our simple reduction proof of Theorem~\ref{bolza} is given in the next two
sections.


\section{Reduction to Constrained Optimization}\label{conic}\sce

First we rewrite the extended Bolza problem \eqref{bolzap} in the form of the constrained optimization problem \eqref{op} in the normed space $\X$ of functions $x=x(\cdot)$ taken from
\eqref{space} and endowed with the norm \eqref{acnorm}. The data of this problem are defined in terms of \eqref{bolzap} by \begin{eqnarray}\label{dataoc}\disp
J(x)&:=&\ph\big(x(0),x(T)\big)+\int_{0}^{T}\Vt\big(t , x(t), \dot{x} (t)\big)\,dt,\nonumber \\f(x)&:=& \big(\dot{x}+ g(\cdot,x(\cdot)),\;(x(0),x(T))\big),\\\nonumber\Theta &:=&\big\{y(\cdot)\in
L^1([0,T];\R^n)\;\big|\;y(t) \in\O_1\;\mbox{ a.e. }\;t\in[0,T] \big\}\times\O_2.\disp \end{eqnarray}

It is easy to see that the set $\Th$ in \eqref{dataoc} is convex and locally closed around $f(\ox(\cdot))$ in the space under consideration. To proceed with the applications of
Theorem~\ref{dualnopc}, we need to check that the mappings $J$ and $f$ from \eqref{dataoc} satisfy the Lipschitz continuity and differentiability assumptions imposed in the latter theorem.

Let us begin by observing that the space $AC([0,T];\R^n)$, which contains $\X$ and is equipped with norm \eqref{acnorm}, is isometric to the space $L^1([0,T];\R^n)\times\R^n$ via the isometry
$x\mapsto(\dot{x},x(0))$, and hence the dual space of $AC([0,T];\R^n)$ can be identified with $L^{\infty}([0,T];\R^n)\times\R^n$. This tells us that the space $\X$ is densely embedded into
$L^1([0,T];\R^n)\times\R^n $, which tells us that the dual space of $\X$ can be identified with $L^{\infty}([0,T];\R^n)\times\R^n$. Furthermore, using the integral representation $$ T x(0)=
\int_0^T x(t)\,dt - \int_0^T\bigg(\int_0^s \dot{x} (t)\,dt\bigg)\,ds $$ implies that the norm $\|\cdot\|_{ac}$ in \eqref{acnorm} is equivalent to \begin{equation}\label{w11norm} \| x \|_{1,1}: =
\int_{0}^{T} \| x (t) \| \; dt + \int_{0}^{T} \| \dot{x}(t) \|\,dt. \end{equation} In fact, we have the precise equivalence relationships \begin{equation}\label{normeq} \frac{1}{ 1+T}\;
\|x\|_{1,1} \leq \|x\|_{ac} \leq \frac{2T +1}{T} \; \| x \|_{1,1}\;\mbox{ for all }\;x \in AC([0,T];\R^n).  \end{equation} Moreover, it follows from the inequality $\disp\bigg\| \int_{0}^{T}  x (t)\,dt\bigg\|
\leq \int_{0}^{T} \big\| x (t)\big\|\,dt$ that \begin{equation}\label{maxnorm} \|x\|_\infty:=\sup\big\{|x_i(t)|\;\big|\;i=1,\ldots,n,\;0\le t\le T\big\}\le \frac{2 + 2T}{T} \; \|x\|_{1,1} \end{equation}

The next two theorems of their own interest verify the Lipschitz continuity and differentiability assumptions of Theorem~\ref{dualnopc} in the case where $J$ and $f$ are taken from \eqref{dataoc}
under the assumptions imposed in (H1)--(H4). Observe that both theorems do not require that $\ox(\cdot)$ is an $\X$-strong local minimizer of \eqref{bolzap} as formulated in latter assumptions
while being hold for broader classes of curves $\ox(\cdot)\in\X$ satisfying the corresponding properties.\vspace*{0.05in}

We start with verifying the required properties of the cost functional $J$ in \eqref{bolzap}.

\begin{Theorem}[\textbf{properties of the cost functional}]\label{fisdef} Let the assumptions ${\rm(H1)}$ and ${\rm(H2)}$ be satisfied around a given curve $\ox=\ox(\cdot)\in\X$ with
$J(\ox)<\infty$. Then the cost functional in \eqref{bolzap} is locally Lipschitzian around $\ox$ and G\^ateaux differentiable at this point of $\X$ with the following calculation of its G\^ateaux
derivative at $\ox$ in any direction $u=u(\cdot)\in\X$: \begin{equation}\label{gJ}\begin{array}{ll}\nabla J(\ox)(u)=&\big\la\nabla\ph\big(\ox(0),\ox(T)\big),\big(u(0),u(T)\big)\big\ra\\\\
&+\disp\int_{0}^{T}\Big[\big\la\nabla_x\Vt\big(t,\ox(t),\dot{\ox}(t)\big),u(t)\big\ra+\la\nabla_v\Vt\big(t,\ox(t),\dot{\ox}(t)\big),\dot{u}(t)\big\ra\Big]\,dt.\end{array} \end{equation}
\end{Theorem}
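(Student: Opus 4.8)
The plan is to establish two properties of $J$ separately: local Lipschitz continuity around $\ox$ in the $\X$-norm, and G\^ateaux differentiability with the claimed formula \eqref{gJ}. Since $J$ splits as a terminal part $\ph(x(0),x(T))$ and an integral part $\int_0^T\Vt(t,x(t),\dot x(t))\,dt$, I would treat these two summands independently and combine at the end. The terminal part is the easier piece: by (H1), $\ph$ is continuously differentiable (hence locally Lipschitz) around $(\ox(0),\ox(T))$, and the evaluation map $x(\cdot)\mapsto(x(0),x(T))$ is a bounded linear operator from $\X$ into $\R^n\times\R^n$, since \eqref{maxnorm} controls $\|x\|_\infty$ (and thus $\|x(0)\|$, $\|x(T)\|$) by the $\|\cdot\|_{1,1}$-norm, which is equivalent to $\|\cdot\|_{ac}$ by \eqref{normeq}. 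Composition of a $C^1$ function with a bounded linear map is locally Lipschitz and differentiable, giving the first term of \eqref{gJ} by the chain rule.

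The integral functional is the substantive part. For local Lipschitz continuity, I would take $x_1(\cdot),x_2(\cdot)\in\X$ with $\|x_1-x_2\|_{ac}\le\ve$, apply the pointwise Lipschitz estimate from (H2), and integrate:
\begin{equation*}
\Big|\int_0^T\!\big[\Vt(t,x_1,\dot x_1)-\Vt(t,x_2,\dot x_2)\big]dt\Big|\le\ell\int_0^T\!\big(\|x_1(t)-x_2(t)\|+\|\dot x_1(t)-\dot x_2(t)\|\big)dt=\ell\,\|x_1-x_2\|_{1,1},
\end{equation*}
which by \eqref{normeq} is bounded by a constant multiple of $\|x_1-x_2\|_{ac}$. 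This gives the local Lipschitz property directly. For the G\^ateaux derivative, I would fix a direction $u(\cdot)\in\X$ and form the difference quotient $t^{-1}[J(\ox+tu)-J(\ox)]$ as $t\dn 0$. Pointwise in $s\in[0,T]$, the continuous differentiability of $\Vt$ in $(x,v)$ from (H2) yields that the integrand difference quotient converges to $\la\nabla_x\Vt,u(s)\ra+\la\nabla_v\Vt,\dot u(s)\ra$. The task is then to pass the limit inside the integral.

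The main obstacle is justifying this interchange of limit and integral. I expect to use a dominated-convergence argument: the pointwise Lipschitz bound from (H2) dominates each difference quotient by $\ell(\|u(s)\|+\|\dot u(s)\|)$, which is summable since $u(\cdot)\in\X\subset AC$ makes $\dot u\in L^1$ and $u$ bounded; with $\ell$ constant this majorant is integrable independently of $t$, so Lebesgue's dominated convergence theorem applies to give the second term of \eqref{gJ}. The delicate point worth care is that the almost-everywhere pointwise differentiability of $\Vt$ holds on a full-measure set that must be chosen uniformly (not depending on $t$), and that $\ox+tu$ stays in the Lipschitz neighborhood for small $t$, which follows from $\|tu\|_{ac}=t\|u\|_{ac}\to 0$. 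Finally, I would note that the resulting directional derivative is linear and continuous in $u$ over $\X$ — linearity being immediate from the integral formula and continuity from the same dominating bound — which upgrades mere existence of directional derivatives to genuine G\^ateaux differentiability at $\ox$, completing the proof.
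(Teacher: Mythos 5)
Your proposal is correct and follows essentially the same route as the paper: the same splitting into terminal and integral parts, the same pointwise Lipschitz estimate integrated via the norm equivalence \eqref{normeq}, the same dominated-convergence argument with majorant $\ell\big(\|u(t)\|+\|\dot u(t)\|\big)$ for the difference quotients, and the same chain-rule treatment of $\ph\circ h$ with $h(x)=(x(0),x(T))$. The only cosmetic difference is that the paper phrases the limit passage through the lower and upper limits of the difference quotient (via the subderivative \eqref{subder}) before concluding that the full directional derivative exists, whereas you pass to the limit directly; both are justified by the same domination.
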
 \begin{proof} First we consider only the integral part \begin{equation}\label{I}I(x):=\int_{0}^{T} \Vt\big(t, x(t), \dot{x} (t)\big)\,dt\end{equation} of the cost functional in
\eqref{dataoc} and establish its local Lipschitz continuity around $\ox$ as well as the G\^ateaux differentiability at $\ox$ with the G\^ateaux derivative representation
\begin{equation}\label{gI} \nabla
I(\ox)(u)=\disp\int_{0}^{T}\Big[\big\la\nabla_x\Vt\big(t,\ox(t),\dot{\ox}(t)\big),u(t)\big\ra+\la\nabla_v\Vt\big(t,\ox(t),\dot{\ox}(t)\big),\dot{u}(t)\big\ra\Big]\,dt\end{equation} for all
$u(\cdot)\in\X$. To proceed, pick any $x(\cdot)\in\X$ near $\ox(\cdot)$ and deduce from (H2) that \begin{equation*}\big|\vt\big(t,x(t),\dot
x(t)\big)\big|\le\big|\vt\big(t,\ox(t),\dot\ox(t)\big)\big|+\ell\|x(t)-\ox(t)\|+\ell\|\dot x(t)-\dot\ox(t)\|. \end{equation*} Taking the integral over $[0,T]$ from both sides of the above
inequality and using the equivalent norm description \eqref{w11norm} implies that $I(x)$ is finite, i.e., the integral functional \eqref{I} is real-valued around $\ox(\cdot)$ in the $\X$-norm.

Next we verify that the integral functional \eqref{I} is Lipschitz continuous around $\ox(\cdot)$ in the $\X$-norm. Take any $x_1(\cdot),x_2(\cdot)\in\X$ from the $\ve$-neighborhood of
$\ox(\cdot)$ in the $\X$-norm where (H2) holds. Combining this assumption with \eqref{normeq} gives us the estimates \begin{eqnarray*} \disp| I(x_1) -  I(x_2) |& \leq & \int_{0}^{T} \big\|
\Vt\big(t , x_1(t), \dot{x}_1 (t)\big) - \Vt\big(t,x_2(t), \dot{x}_2 (t)\big)\big\|\,dt \\ &\leq & \int_{0}^{T} \big( \ell \| x_1(t) - x_2(t) \| + \ell \| \dot{x}_1 (t) - \dot{x}_2 (t) \|
\big)\,dt \\ &\leq & \ell \|x_1 - x_2 \|_{\ss 1,1} \leq \ell(1+T) \; \| x_1 - x_2\|_{ac},\disp\end{eqnarray*} which ensure the claimed local Lipschitz continuity of the functional $I$ in $\X$
around $\ox(\cdot)$.

To show now that $I$ is G\^ateaux differentiable at $\ox(\cdot)$, fix any direction $u(\cdot)\in\X$. Using definition \eqref{subder} of the Dini-Hadamard subderivative and the established local
Lipschitz continuity of $I$ at $\ox(\cdot)$ in the space $\X$ under consideration, we find a decreasing sequence of positive number $\tau_k \dn 0$ as $k\to\infty$ such that \begin{eqnarray*}
\disp \d I(\ox)(u) &=& \liminf_{\tau \dn 0} \frac{I(\ox + \tau u) - I(\ox)}{\tau}= \lim_{k\to \infty} \frac{I(\ox + \tau_k u) - I(\ox)}{\tau_k} \\ &=& \lim_{k \to \infty} \int_{0}^{T}
\bigg(\frac{\Vt\big(t , \ox(t)+ \tau_k u(t), \dot{\ox} (t)+ \tau_k \dot{u}(t)\big) - \Vt\big(t , \ox(t), \dot{\ox} (t)\big)}{\tau_k}\bigg)\,dt \\ &=& \int_{0}^{T}\bigg(\lim_{k \to \infty}
\frac{\Vt\big(t ,\ox(t)+ \tau_k u(t), \dot{\ox} (t)+ \tau_k \dot{u}(t)\big) - \Vt\big(t , \ox(t), \dot{\ox} (t)\big)}{\tau_k}\bigg)\,dt \\ &=& \int_{0}^{T}\big\la\nabla_x\Vt\big(t , \ox(t),
\dot{\ox} (t)\big) , u(t)\big\ra +\big\la\nabla_v\Vt\big(t , \ox(t), \dot{\ox}(t)\big) , \dot{u}(t)\big\ra\,dt, \disp \end{eqnarray*} where in the third line we interchange the limit and integral
signs by using the Lebesgue dominated convergence theorem with taking into account the integrand function in the second line is dominated by $\ell\big (\|u(t)\|+\|\dot{u}(t)\|\big)$ for a.e. $t
\in [0,T]$ due to Lipschitzian assumption in (H2). The last line above comes from the smoothness assumption on $\Vt$ imposed in (H2). By similar arguments we arrive at the upper limit
representation \begin{equation*} \limsup_{\tau \dn 0} \frac{I(\ox + \tau u) - I(\ox)}{\tau} = \int_{0}^{T}\big\la\nabla_x\Vt\big(t, \ox(t),\dot{\ox} (t)\big), u(t)\big\ra +
\big\la\nabla_v\Vt\big(t , \ox(t), \dot{\ox} (t)\big) , \dot{u}(t)\big\ra\,dt. \end{equation*} Unifying the latter with the previous one for the lower limit proves the existence of the classical
directional derivative  of $I$ at $\ox$ given by \begin{equation*} \d I(\ox)(u)= \lim_{\tau \dn 0} \frac{I(\ox + \tau u)-I(\ox)}{\tau}\;\mbox{ for any }\;u(\cdot)\in\X,\end{equation*} which is
clearly linear and continuous with respect to the direction variable with $| \d I(x) (u) | \leq \ell \|u \|$ for each $u\in\X$. This shows that the integral functional \eqref{I} is G\^ateaux
differentiable at $\ox$ with the G\^ateaux derivative representation \eqref{gI}.

To complete the proof of the theorem, it remains to show that the mapping $x(\cdot)\mapsto\ph(x(0) , x(T))$ associated with the terminal cost in \eqref{bolzap} is continuously differentiable at
$\ox(\cdot)$ in the space $\X$. To this end, observe that this mapping can be represented in the composition form $\ph\circ h$ with $h(x):=(x(0), x(T))$, which is a linear operator on $\X$.
Combining the inequalities in \eqref{normeq} and \eqref{maxnorm} tells us that $h\colon\X\to\R^n\times\R^n$ is a bounded linear mapping on $\X$ satisfying the estimates $$|h(x)|\leq\sqrt{2} \| x
\|_{\infty} \leq 2 \sqrt{2} \frac{1+T}{T} \| x \|_{\ss 1,1} \leq 2 \sqrt{2} \frac{(1 + T)^2}{T} \| x \|_{ac}\;\mbox{ for all }\;x\in\X.$$ This ensures the continuous differentiability of $h$ on $\X$ with the derivative
$\nabla h(\ox)(u) =(u(0),u(T))$ whenever $u(\cdot)\in\X$. Applying finally the classical chain rule verifies that the composition $\ph\circ h$ is differentiable on $\X$ with the derivative
$$\nabla\big[\ph\circ h\big](\ox)(u)=\big\la\nabla \ph\big(\ox(0) , \ox(T)\big),(u(0),u(T)\big)\big\ra,\quad u(\cdot)\in\X,$$ which justifies together with \eqref{gI} the claimed formula
\eqref{gJ} and thus ends the proof. \end{proof}

The next theorem deals with the constraint mapping $g\colon[0,T]\times\R^n\to\R^n$ from \eqref{bolzap} and verifies that the assumptions in (H3) imposed on $g$ ensure the fulfillment of the
smoothness assumption on the mapping $f\colon\X\to\X\times\R^n\times\R^n$ defined in \eqref{dataoc}, which is required by the necessary optimality conditions of Theorem~\ref{dualnopc}.

Prior to the formulation and proof of this result, we recall the notion of continuous embedding. Let $\X$ and $\Z$ be two normed spaces. Then $\X$ is said to be {\em continuously embedded} into
$\Z$, with the notation $ \X \hookrightarrow \Z $, if $\X \subset \Z$ and the identity mapping $i : \X \to \Z$ is continuous. For example, $L^{\infty}([0,T];\R^n) \hookrightarrow
L^{1}([0,T];\R^n)$, and by using \eqref{maxnorm} we have that $AC[0,T] \hookrightarrow L^{\infty} [0 , T]$. It is straightforward to check by definition that if $F : \Tilde\X \to\Tilde\Z$ is a
(Fr\'echet) continuously differentiable mapping and if $\X \hookrightarrow\Tilde\X $ and $\Tilde Z \hookrightarrow \Z$, then $F: \X \to \Y$ is continuously differentiable as well.

\begin{Theorem}[\textbf{Fr\'echet differentiability of the constraint mapping}]\label{Fisdiff} Let $g\colon[0, T] \times \R^n \to \R^n$ satisfy the assumptions imposed in {\rm(H4)} around a given
curve $\ox(\cdot)\in\X$. Then the mapping $f: \X \to L^1([0,T];\R^n)\times\R^n \times \R^n$ defined in \eqref{dataoc} is continuously Fr\'echet differentiable around $\ox=\ox(\cdot)$, and its
Fr\'echet derivative operator at $\ox$ is calculated by the following formula, which is valid for a.e.\ $t\in[0,T]$: \begin{equation}\label{Fg} \nabla f(\ox)u(t)=\big(\dot{u}(t) + \nabla_x g(t
,\ox(t))u(t),\;(u(0) , u(T)\big)\;\mbox{ whenever }\;u(\cdot)\in\X. \end{equation}\end{Theorem}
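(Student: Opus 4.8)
The plan is to split the mapping $f$ from \eqref{dataoc} into three pieces and treat each separately, reserving the genuine work for the superposition (Nemytskii) part. Write $f(x)=\big(Dx+Gx,\,hx\big)$, where $Dx:=\dot x$, $Gx:=g(\cdot,x(\cdot))$, and $hx:=(x(0),x(T))$. The operator $D$ is linear and bounded from $\X$ into $L^1([0,T];\R^n)$ since $\|Dx\|_{L^1}\le\|x\|_{ac}$, while $h$ was already shown in the proof of Theorem~\ref{fisdef} to be a bounded linear mapping from $\X$ into $\R^n\times\R^n$. Both are therefore continuously Fr\'echet differentiable on $\X$ with $\nabla D(\ox)u=\dot u$ and $\nabla h(\ox)u=(u(0),u(T))$, and since a map into a product is continuously differentiable exactly when its components are, while a sum of continuously differentiable maps is such with additive derivative, the theorem reduces to proving that $G$ is continuously Fr\'echet differentiable around $\ox$ with $\nabla G(\ox)u(t)=\nabla_x g(t,\ox(t))u(t)$.

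By the embedding principle recalled just before the theorem, it suffices to establish this differentiability for the superposition operator $\Tilde G\colon L^\infty([0,T];\R^n)\to L^1([0,T];\R^n)$ defined by $\Tilde G(y)(t):=g(t,y(t))$, because $\X\hookrightarrow AC([0,T];\R^n)\hookrightarrow L^\infty([0,T];\R^n)$ by \eqref{maxnorm} and the equivalence \eqref{normeq}, while $L^1\hookrightarrow L^1$ trivially. Well-definedness and measurability of $\Tilde G(y)$ follow from the Carath\'eodory structure of $g$ assumed in (H4), and the essential boundedness of $g$ places $\Tilde G(y)$ in $L^1$. The candidate derivative is the multiplication operator $(A_{\oy}v)(t):=\nabla_x g(t,\oy(t))v(t)$, which maps $L^\infty$ boundedly into $L^1$ precisely because $\nabla_x g$ is essentially bounded in (H4).

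The core estimate comes from the fundamental theorem of calculus applied in the state variable: for $v\in L^\infty$ one has
\[
\Tilde G(\oy+v)(t)-\Tilde G(\oy)(t)-(A_{\oy}v)(t)
=\int_0^1\big[\nabla_x g\big(t,\oy(t)+sv(t)\big)-\nabla_x g\big(t,\oy(t)\big)\big]v(t)\,ds,
\]
so the $L^1$-norm of the remainder is bounded by $\|v\|_\infty\cdot\ve(v)$ with $\ve(v):=\int_0^T\int_0^1\|\nabla_x g(t,\oy(t)+sv(t))-\nabla_x g(t,\oy(t))\|\,ds\,dt$. As $\|v\|_\infty\to0$, the integrand tends to $0$ for a.e.\ $t$ and all $s\in[0,1]$ by continuity of $\nabla_x g(t,\cdot)$, while being dominated by the integrable constant furnished by the essential bound on $\nabla_x g$; the Lebesgue dominated convergence theorem then gives $\ve(v)\to0$, which is exactly Fr\'echet differentiability of $\Tilde G$ at $\oy$. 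Continuity of the map $\oy\mapsto A_{\oy}$ in the operator norm of $\mathcal L(L^\infty,L^1)$ follows from the identical dominated-convergence argument applied to $\int_0^T\|\nabla_x g(t,\oy_1(t))-\nabla_x g(t,\oy_2(t))\|\,dt$. Restricting $\Tilde G$ to $\X$ through the embedding and recombining with $D$ and $h$ then yields formula \eqref{Fg}.

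The main obstacle, and the reason the choice of target space is delicate, is this remainder estimate: Nemytskii operators are notoriously \emph{not} Fr\'echet differentiable between $L^p$ spaces for finite $p$, and the argument succeeds here only because the domain is taken in the uniform ($L^\infty$) topology, into which $\X$ embeds via \eqref{maxnorm}, whereas the codomain is $L^1$. This asymmetry is what converts the merely pointwise convergence of $\nabla_x g(t,\cdot)$ into norm convergence of the remainder through dominated convergence, with the essential boundedness in (H4) supplying the required dominating function.
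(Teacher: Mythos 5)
Your proof is correct, and its skeleton coincides with the paper's: split $f$ into the bounded linear pieces $x\mapsto\dot x$ and $x\mapsto(x(0),x(T))$ plus the superposition operator $G(x)=g(\cdot,x(\cdot))$, then transfer the differentiability question to a Nemytskii operator on $L^\infty([0,T];\R^n)$ via the embeddings $\X\hookrightarrow L^\infty([0,T];\R^n)$ and $L^\infty\hookrightarrow L^1$. The one genuine difference is how the key step is handled: the paper cites \cite[Theorem~7]{gkt} (with $p=q=\infty$) to obtain continuous Fr\'echet differentiability of the superposition operator $G^\infty\colon L^\infty\to L^\infty$ and then composes with embeddings, whereas you prove the differentiability of $\Tilde G\colon L^\infty\to L^1$ from scratch through the integral mean-value identity and dominated convergence, using the essential bound on $\nabla_x g$ from (H4) as the dominating function. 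Your version is self-contained and makes explicit why the $L^\infty$-domain/$L^1$-codomain asymmetry is what saves the argument (Nemytskii operators generally fail to be Fr\'echet differentiable from $L^p$ to $L^p$ for finite $p$), which the paper leaves implicit in the citation; the paper's version is shorter and inherits whatever generality the cited theorem provides. The only point worth tightening is the domination step: you should note that (H4)'s essential bound on $\nabla_x g(t,x)$ is available uniformly for all curves in an $L^\infty$-neighborhood of $\ox(\cdot)$ (this is how the localization in (H4) must be read for either proof to work), so that $\nabla_x g(t,\oy(t)+sv(t))$ is dominated by a single integrable constant for all small $v$ and all $s\in[0,1]$.
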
 \begin{proof} Given $g$ satisfying (H3) around the fixed curve $\ox(\cdot)\in\X$, define the
mapping $G: \X \to L^1([0,T];\R^n)$ by \begin{equation}\label{G} G(x)(t) := g\big(t , x(t)\big)\;\mbox{ for all }\;x=x(\cdot)\in\X\;\mbox{ and a.e. }\;t\in[0,T].\end{equation} We are going to
verify that $G$ is continuously differentiable around $\ox(\cdot)$ in the Fr\'echet sense with its Fr\'echet derivative at $\ox(\cdot)$ calculated by \begin{equation}\label{FG} \nabla
G(\ox)u(t)=\nabla_x g\big(t ,\ox(t)\big) u(t) \;\mbox{ a.e. }\;t\in[0,T] \end{equation} for all all $u(\cdot)\in\X$. Consider the mapping $G^\infty\colon L^{\infty}([0 ,T];\R^n)\to
L^{\infty}([0,T];\R^n)$ given by \eqref{G} but acting between different spaces in comparison with $G$. Also define the corresponding derivative mapping $D: L^{\infty}([0 ,T];\R^n) \to
L^{\infty}([0,T];\R^n)$ by \begin{equation*} D(x)(t):=\nabla_x g\big(t,x(t)\big)\;\mbox{ whenever }\;x(\cdot)\in L^\infty([0,T];\R^n).\end{equation*} We deduce from (H4) that both $G^\infty$ and
$D$ are well-defined for all $x(\cdot)$ near $\ox(\cdot)$.

All of this ensures that the assumptions in \cite[Theorem~7]{gkt} are satisfied for the case where $p=q=\infty$ therein, and thus we get by the latter result that the above mapping $G^\infty$ is
continuously Fr\'echet differentiable around $\ox(\cdot)$. The aforementioned embeddings $L^{\infty}([0,T];\R^n)\hookrightarrow L^{1}([0,T];\R^n)$ and $\X \hookrightarrow L^{\infty}([0 ,
T];\R^n)$ combined with \cite[Theorem~7]{gkt} tell us therefore that the mapping $G$ from \eqref{G} is continuously Fr\'echet differentiable around $\ox(\cdot)$ with its Fr\'echet derivative at
$\ox(\cdot)$ calculated by formula \eqref{FG}.

Considering further the constraint mapping $f: \X \to L^1([0,T];\R^n) \times\R^n \times \R^n$ defined in \eqref{dataoc} via $\dot x(\cdot)$, $g(\cdot,x(\cdot))$, and $(x(0),x(T))$. Observe that
$x(\cdot)\mapsto\dot{x}(\cdot)$ is a bounded linear mapping from $\X$ to $L^{1}([0,T];\R^n)$ due to the obvious inequality $\| \dot{x}\|_{ L^1}\le\|x\|_{ac}$. Thus it is continuously Fr\'echet
differentiable together with the mapping $x(\cdot)\mapsto(x(0),x(T))$ as shown in the proof of Theorem~\ref{fisdef}. Combining all of this with the above result for the mapping $G$ from \eqref{G}
tells us that the constraint mapping $f$ is Fr\'echet differentiable around $\ox(\cdot)$ with its derivative at $\ox(\cdot)$ calculated by \eqref{Fg}. This completes the proof. \end{proof}

The last result of this section concerns the calculation of the normal cone $N_\Th$ in the necessary optimality conditions of Theorem~\ref{dualnopc} for the convex set $\Th$ defined in
\eqref{dataoc} via the initial data of the extended Bolza problem \eqref{bolzap}. In fact, the structure of the set $\Th$ in \eqref{dataoc} suggests that it suffices to calculate the normal cone
to the set \begin{equation}\label{hatc}\Th_1:=\big\{y\in L^1([0,T];\R^n)\;\big|\;y(t)\in\O_1\;\mbox{ a.e. }\;t\in [0,T]\big\},\end{equation} where $\O_1$ is a closed and convex subset of $\R^n$.
Indeed, from the calculation of $N_{\Th_1}$ we immediately come to the required formula for the normal cone to the set $\Th$ in question by the elementary calculus rule for normals to set
products in convex analysis.

\begin{Theorem}[\textbf{normal cone calculation for the constraint set}]\label{nconecal}  Let $\oy=\oy(\cdot)\in L^1([0,1];\R^n)$ be such that $\oy(t)\in\O_1$, where the set $\O_1\subset\R^n$ is
convex and locally closed around $\oy(t)$ for a.e. $t\in[0,T]$. Then we have the calculation formula \begin{equation}\label{nconecal1} N_{\Th_1} (\oy) =\big\{ p \in L^{\infty}([0,T];\R^n) \;
\big|\; p(t)\in N_{\O_{1}}(\oy(t))\;\mbox{ a.e. }\;t\in [0,T]\big\} \end{equation} for the normal cone to the set $\Th_1$ defined in \eqref{hatc}.\end{Theorem}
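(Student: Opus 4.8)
The plan is to prove the two inclusions in \eqref{nconecal1} separately, reducing the nontrivial one to a pointwise statement that is then handled by a measurable selection argument. Throughout I would use the identification $(L^1([0,T];\R^n))^*=L^\infty([0,T];\R^n)$, under which $\la p,y\ra=\int_0^T\la p(t),y(t)\ra\,dt$; in particular every $p\in N_{\Th_1}(\oy)$ automatically belongs to $L^\infty([0,T];\R^n)$, and $\Th_1$ is convex so that \eqref{nc} applies. The inclusion ``$\supseteq$'' is the easy one: if $p(\cdot)\in L^\infty$ satisfies $p(t)\in N_{\O_1}(\oy(t))$ a.e., then for any $y(\cdot)\in\Th_1$ one has $y(t)\in\O_1$ a.e., whence $\la p(t),y(t)-\oy(t)\ra\le0$ a.e.\ by the definition \eqref{nc} of the convex normal cone; integrating over $[0,T]$ gives $\la p,y-\oy\ra\le0$, i.e., $p\in N_{\Th_1}(\oy)$.

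For the reverse inclusion I would fix $p\in N_{\Th_1}(\oy)\subset L^\infty$ and aim for $p(t)\in N_{\O_1}(\oy(t))$ a.e. First I would reduce this to a scalar pointwise statement via the truncated marginal function
$$\rho(t):=\sup\big\{\la p(t),w-\oy(t)\ra\;\big|\;w\in\O_1,\ \|w-\oy(t)\|\le1\big\},$$
which is finite (bounded by $\|p\|_\infty$) and nonnegative (take $w=\oy(t)$). Using the convexity of $\O_1$ together with the scaling $w_\lambda:=\oy(t)+\lambda(w-\oy(t))\in\O_1$ for $\lambda\in[0,1]$, one verifies that $p(t)\in N_{\O_1}(\oy(t))$ if and only if $\rho(t)=0$: for arbitrary $w\in\O_1$ the point $w_\lambda$ lies in the unit ball around $\oy(t)$ for small $\lambda>0$, and $\la p(t),w_\lambda-\oy(t)\ra=\lambda\la p(t),w-\oy(t)\ra\le\rho(t)$, so $\rho(t)\le0$ forces $\la p(t),w-\oy(t)\ra\le0$. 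Thus it suffices to prove $\rho(t)=0$ for a.e.\ $t$. The truncation $\|w-\oy(t)\|\le1$ is imposed precisely to keep the competing functions integrable later, while the local closedness of $\O_1$ around $\oy(t)$ guarantees (after shrinking the radius if necessary) that the feasible set in the supremum is closed, hence the relevant multifunction is closed-valued.

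I would then argue by contradiction. If $\rho(t)>0$ on a set of positive measure, there are $\delta>0$ and a measurable set $A\subset[0,T]$ of positive measure with $\rho(t)>\delta$ on $A$. The core step is to extract a measurable approximate maximizer: since $\oy(\cdot)$ is measurable, the multifunction $t\mapsto\O_1\cap\{w:\|w-\oy(t)\|\le1\}$ is measurable with closed values and $(t,w)\mapsto\la p(t),w-\oy(t)\ra$ is a Carath\'eodory integrand, so by the measurable maximum theorem (see, e.g., \cite{rw}) the function $\rho$ is measurable and there exists a measurable $w\colon A\to\R^n$ with $w(t)\in\O_1$, $\|w(t)-\oy(t)\|\le1$, and $\la p(t),w(t)-\oy(t)\ra>\tfrac{\delta}{2}$ a.e.\ on $A$. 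Defining $y(t):=w(t)$ for $t\in A$ and $y(t):=\oy(t)$ otherwise, I get $y(\cdot)\in\Th_1$ because $y(t)\in\O_1$ a.e.\ and $\|y-\oy\|_{L^1}=\int_A\|w(t)-\oy(t)\|\,dt\le|A|<\infty$. But then $\la p,y-\oy\ra=\int_A\la p(t),w(t)-\oy(t)\ra\,dt\ge\tfrac{\delta}{2}\,|A|>0$, contradicting $p\in N_{\Th_1}(\oy)$. Hence $\rho=0$ a.e., which yields the claimed pointwise inclusion.

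The step I expect to be the main obstacle is the measurable selection of the approximate maximizer $w(\cdot)$, equivalently the measurability of $\rho$: this requires combining the measurability of $\oy(\cdot)$, the local closedness of $\O_1$ to produce a closed-valued measurable multifunction whose truncation keeps things bounded, and a measurable maximum/selection theorem. Once this is secured, the remaining delicate point---integrability of the constructed competitor $y(\cdot)$, which must lie in $L^1$ to be feasible for $\Th_1$---is handled automatically by the a priori bound $\|w(t)-\oy(t)\|\le1$ built into the definition of $\rho$.
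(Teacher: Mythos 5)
Your proof is correct, but it follows a genuinely different route from the paper's. For the easy inclusion you and the paper argue identically. For the hard inclusion $N_{\Th_1}(\oy)\subset\{p\in L^{\infty}\;|\;p(t)\in N_{\O_1}(\oy(t))\mbox{ a.e.}\}$ the paper avoids measurable selection entirely: it fixes a countable dense subset $Q=\{c_i\}$ of $\O_1$, tests $p$ against the competitors $y^i$ obtained by overwriting $\oy$ with the constant $c_i$ on a shrinking interval $(t-r,t+r)$, and applies the Lebesgue differentiation theorem as $r\downarrow 0$ to obtain $\la p(t),c_i-\oy(t)\ra\le 0$ off a null set $S_i$; taking the countable union of the $S_i$ and using the density of $Q$ in $\O_1$ finishes the argument. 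You instead dualize through the truncated marginal function $\rho$, invoke a measurable maximum/selection theorem, and conclude by contradiction on a set of positive measure. Both arguments are sound. The paper's is more elementary---it needs only separability of $\R^n$ and Lebesgue points, and it is indifferent to whether $\O_1$ is actually closed, since only density of $Q$ in $\O_1$ is used---whereas your version leans on the closed-valuedness (hence compactness) of $t\mapsto\O_1\cap\{w:\|w-\oy(t)\|\le 1\}$ for the selection theorem. That is harmless here because the paper takes $\O_1$ closed in the description of \eqref{hatc}, and for convex sets passing to the closure does not change $N_{\O_1}$ at points of $\O_1$; if one insisted on only local closedness, you would need to select a measurable \emph{approximate} maximizer lying in $\O_1$ itself so that your competitor remains in $\Th_1$. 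What your route buys is robustness: since the competitor is supported on an arbitrary measurable set and the maximizer is selected measurably, the same proof extends verbatim to a time-dependent, measurable, closed-valued constraint $\O_1(t)$, where no single countable dense subset is available.
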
 \begin{proof} Pick any $p \in
L^{\infty}([0,T];\R^n)$ such that $p(t) \in N_{\Th_1} (\oy (t))$ for a.e. $t \in [0,T]$. Taking an arbitrary function $y(\cdot) \in\Th_1$ and using the normal cone definition \eqref{nc} we get
that $\la p(t),y(t)-\oy(t)\ra\le 0 $ for a.e. $t \in [0,T]$. This leads us, by using the canonical pairing between $L^1([0,T];\R^n)$ and the dual space  $L^\infty([0,T];\R^n)$, to $$\la p \;, \;
y-\oy \ra = \int_{0}^{T} \la p(t) , y(t) - \oy (t) \ra\,dt \leq 0,$$ which yields $p \in N_{\Th_1} (\oy)$. To verify the opposite inclusion, fix $ p \in N_{\Th_1} (\oy)$ and $t \in (0, T)$.
Taking a countable dense subset $Q:= \{c_i\}_{i=1}^{\infty}$ of $\O_1$, pick any $c_i\in Q $ and choose $r=r(t) > 0$ to be so small that $(t-r , t+r) \subset (0,T)$. Define now $y^i : [0,T] \to
\R^n$ by $$y^i (s) :=\left\{\begin{matrix} c_i & s \in (t-r , t+r), \\
 \oy(s)& s \notin (t-r , t+r)
\end{matrix}\right.$$ and easily observe that $y^i(\cdot)\in\Th_1$. Thus we get $$ 0 \geq \la p , y^i - \oy \ra  = \int_{0}^{T} \la p(s) , y^i (s) -\oy(s) \ra\,ds = \int_{t-r}^{t+r} \la p(s) ,
c_i - \oy(s) \ra\,ds. $$ Then basic real analysis tells us that $$ \frac{1}{2r} \int_{t-r}^{t+r} \la p(s) , c_i - \oy(s) \ra\,ds \to \la p(t) , c_i - \oy(t) \ra \quad\mbox{as} \quad r \dn 0 $$
for all $t\in [0,T] \setminus S_i$ with $S_i$ being of zero measure. This implies that for all $i$ we have $$ \la p(t),c_i - \oy(t) \ra \le 0\;\mbox{ whenever }\;t \notin S_i. $$ Consider further
the set $S:=\bigcup_{i=0}^{ \infty} S_i$, which is also of zero measure, where $S_0 : = \{ t \in [0,T]\;| \; \oy(t) \notin\O_1\}$. Hence we have $$ \la p(t),c_i - \oy(t) \ra \le 0\;\mbox{ for all
}\; i\in\{1,2,\ldots\}\;\mbox{ and }\;t \notin S.$$ Since the set $Q$ is dense in $\O_1$, it follows from the above that $p(t)\in N_{\O_1}(\oy(t))$ for a.e. $t\in[0,T]$, which completes the proof
of the theorem.\end{proof}


\section{Derivation of Necessary Conditions}\label{sec:proof}

Having in hand the above results supporting the reduction of the extended Bolza problem \eqref{bolzap} to the nondynamic constrained optimization \eqref{op}, we derive in this section the
necessary optimality conditions for \eqref{bolzap} formulated in Theorem~\ref{bolza} from those obtained in Theorem~\ref{dualnopc} for problem \eqref{op} in general normed spaces.\vspace*{0.05in}

In our derivation we need the following extended version of the {\em fundamental lemma of the calculus of variations} that we were not able to find in the literature.

\begin{Lemma}[\textbf{extended fundamental lemma of the calculus of
variations}]\label{flemma} Let $a,b \in \R^n$, and let
$l(\cdot),q(\cdot)\in L^1([0,T];\R^n)$. Assume that
\begin{equation}\label{flemma1} \int_{0}^{T} \langle l(t), h(t)
\rangle\,dt +  \int_{0}^{T}  \langle q(t) , \dot{h} (t) \rangle\,dt
+ \langle h(0) , a \rangle + \langle h(T) , b \rangle  =
0\end{equation} for all $h(\cdot)\in C^{\infty}([0,T];\R^n).$ Then
there exists a unique function $\oq(\cdot)\in AC([0,T];\R^n)$ such
that $q(t) =\oq(t)$ for a.e. $t\in[0,T]$ with $$\oq (0) =
a,\;\oq(T)= - b,\;\mbox{ and }\;l(t)=\dot{\oq}(t)\;\mbox{ a.e.}
\;t\in [0,T] .$$ Moreover, the function $\oq(t)$ can be determined
by \begin{equation}\label{oq} \oq(t) = \frac{d}{dt} \int_{0}^{t}
q(s)\,ds\;\mbox{ whenever }\;t\in [0,T]. \end{equation} \end{Lemma}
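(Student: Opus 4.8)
The plan is to reduce \eqref{flemma1} to a single condition of du Bois--Reymond type by absorbing the $\la l,h\ra$ term into the $\la\cdot,\dot h\ra$ term. First I would set $L(t):=\int_0^t l(s)\,ds$, which is absolutely continuous with $\dot L=l$ a.e.\ and $L(0)=0$. Since $h\in C^\infty$ and $L\in AC$, integration by parts gives $\int_0^T\la l(t),h(t)\ra\,dt=\la L(T),h(T)\ra-\int_0^T\la L(t),\dot h(t)\ra\,dt$. Substituting this into \eqref{flemma1} and collecting terms, I obtain the equivalent identity
\[ \int_0^T\la q(t)-L(t),\dot h(t)\ra\,dt+\la h(0),a\ra+\la h(T),L(T)+b\ra=0 \]
valid for all $h\in C^\infty([0,T];\R^n)$.

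Next I would extract the pointwise information. Restricting first to test functions $h$ with $h(0)=h(T)=0$ annihilates the boundary terms and leaves $\int_0^T\la q-L,\dot h\ra\,dt=0$ for all such $h$. Writing $r:=q-L\in L^1$ and noting that $\dot h$ ranges over exactly the smooth maps $\phi$ with $\int_0^T\phi=0$ (via $h=\int_0^{\cdot}\phi$), the classical du Bois--Reymond argument applies: setting $c:=\frac1T\int_0^T r(t)\,dt$ and testing against $\phi=\psi-\frac1T\int_0^T\psi$ for an arbitrary smooth $\psi$ yields $\int_0^T\la r-c,\psi\ra\,dt=0$ for every smooth $\psi$, whence $r(t)=c$ for a.e.\ $t$. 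Thus $q(t)=L(t)+c$ a.e., and the candidate $\oq(t):=L(t)+c$ is absolutely continuous, agrees with $q$ a.e., and satisfies $\dot{\oq}(t)=l(t)$ a.e.

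It remains to pin down $c$ and verify the endpoint relations. Substituting $q-L\equiv c$ back into the displayed identity and using $\int_0^T\la c,\dot h\ra\,dt=\la c,h(T)-h(0)\ra$ turns it into $\la h(0),a-c\ra+\la h(T),c+L(T)+b\ra=0$ for all $h$. Since $h(0)$ and $h(T)$ may be prescribed independently by smooth functions, this forces $c=a$ and $c+L(T)+b=0$, i.e.\ $\oq(0)=c=a$ and $\oq(T)=L(T)+c=-b$. Uniqueness is immediate, as two absolutely continuous functions that coincide a.e.\ coincide everywhere by continuity. Finally, formula \eqref{oq} follows by observing that once $q$ agrees a.e.\ with the continuous function $\oq$, the map $t\mapsto\int_0^t q(s)\,ds$ equals $\int_0^t\oq(s)\,ds$ and is therefore continuously differentiable, with everywhere-defined derivative $\oq(t)$.

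The main obstacle is the du Bois--Reymond step, which must be carried out with $C^\infty$ rather than $C^1$ or merely continuous test functions; the averaging trick above reduces it to the base case that $w\in L^1$ with $\int_0^T\la w,\psi\ra\,dt=0$ for all smooth $\psi$ implies $w=0$ a.e., which itself rests on density of smooth functions together with the Lebesgue differentiation theorem. The genuinely new feature relative to the classical lemma is the simultaneous presence of the running terms and the endpoint terms $\la h(0),a\ra+\la h(T),b\ra$, but these are disentangled cleanly once the $\dot h$-reformulation exposes $h(0)$ and $h(T)$ as free independent data.
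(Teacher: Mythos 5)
Your proposal is correct and follows essentially the same route as the paper's proof: introduce $L(t)=\int_0^t l(s)\,ds$, integrate by parts to convert the $\langle l,h\rangle$ term into a $\langle\cdot,\dot h\rangle$ term, apply the du Bois--Reymond argument on test functions vanishing at the endpoints to get $q-L$ constant a.e., and then use the independence of $h(0)$ and $h(T)$ to identify the constant and the endpoint values. The only differences are cosmetic (you perform the integration by parts before restricting to $h(0)=h(T)=0$, and you spell out the averaging step that the paper leaves implicit).
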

\begin{proof} We may assume for simplicity that all the functions
under consideration are real-valued (not vector-valued), because in
the vector setting the same arguments can be applied to the
components of these functions. To begin with, let us first use
\eqref{flemma1} for smooth functions $h(\cdot)$ with $h(0)=h(T)=0$.
Since $l(\cdot)\in L^1([0,T];\R)$, the integral $$
L(t):=\int_0^tl(s)ds$$ is absolutely continuous on $[0,T]$.
Integrating the first term in \eqref{flemma1} by parts gives us
$$\int_0^T\big\la (q-L)(t), \dot{h}(t)\big\ra\,dt=0\;\mbox{ for all
such }\;h(\cdot).$$ This implies that $q(\cdot)-L(\cdot)$ is a
constant function a.e.\ on $[0,T]$, which allows us to find a real
number $\gg\in \R$ such that $q(t) = L(t)+\gg$ for a.e. $t \in
[0,T]$. Defining $\oq(t):=L(t) +\gg$ for all $t\in[0,T]$, we
immediately get $\oq(\cdot)\in AC([0,T];\R)$ with $\dot{\oq}
(t)=l(t)$ and $\oq(t) = q(t)$ a.e.\ on $[0, T]$. The latter allows
us to conclude that \eqref{flemma1} holds for all $h(\cdot)\in{\cal
C}^{\infty}([0 ,T];\R)$ if we replace $q(\cdot)$ by $\oq(\cdot)$
therein. This leads us to the equality $$\big\langle
h(T),\big(b+\oq(T)\big)\big\rangle +\big\langle
h(0),\big(a-\oq(0)\big)\big\rangle=0\;\mbox{ for all
}\;h(\cdot)\in{\cal C}^{\infty}([0 ,T];\R),$$ which yields
$\oq(0)=a$ and $\oq(T)=-b$. Observe that the integral mapping $ t
\mapsto \int_{0}^{t}\bar q(s)\,ds $ is differentiable by the
fundamental theorem of calculus. Thus for all $t \in [0,T]$ we have
$$\oq(t) = \frac{d}{dt} \int_{0}^{t} \oq(s)\,ds = \frac{d}{dt}
\int_{0}^{t}q(s)\,ds, $$ which verifies \eqref{oq} and completes the
proof of the lemma. \end{proof}

It is worth mentioning that if the functions $l(\cdot)$ and $q(\cdot)$ satisfy the assumptions of Lemma~\ref{flemma}, then any pointwise perturbations of them on a measure zero subset of $[0,T]$
also satisfy these assumptions. This tells us that there are many functions $q(\cdot)$ satisfying \eqref{flemma1}, which are not absolutely continuous on $[0,T]$, and thus they differ from their
(unique) absolute continuous representatives.\vspace*{0.05in}

Now we are in a position to prove Theorem~\ref{bolza} by combining the above results on the reduction of the extended Bolza problem \eqref{bolza} to problem \eqref{op} of nondynamic optimization
in normed spaces with the usage of some other tools of variational analysis.\\[1ex] {\bf Proof of Theorem~\ref{bolza}.} Fix any normed space $\X$ of functions $x\colon[0,T]\to\R^n$ satisfying the
inclusions in \eqref{space}, and let $\Y := L^1([0 ,T];\R^n)\times\R^n\times\R^n$. As discussed in Section~\ref{conic}, the extended Bolza problem \eqref{bolzap} can be written in the nondynamic
form \eqref{op} of constrained optimization with $J\colon\X\to\R$, $f\colon\X\to\Y$, and $\Th\subset\Y$ defined in \eqref{dataoc}. Let us confirm that all the assumptions of
Theorem~\ref{dualnopc} hold for the initial data \eqref{dataoc} generated by the extended Bolza problem under the assumptions in (H1)--(H5). Indeed, the convexity of $\Th=\Th_1\times\O_2$, with
$\Th_1$ defined in \eqref{hatc}, immediately follows from the convexity of $\O_1$ and $\O_2$ imposed in (H3). The local closedness of $\Th\subset\Y$ around $f(\ox(\cdot))$ follows from the
closedness of $\O_1$, $\O_2$ in (H3) and the structure of $\Th$ in \eqref{dataoc} due the classical result of real analysis telling us that the (norm) convergence of the sequence in
$L^1([0,T];\R^n)$ yields the a.e.\ convergence of a subsequence on $[0,T]$. The G\^ateaux differentiability of the cost functional $J$ at $\ox$ and its local Lipschitz continuity around this
point under the assumptions in (H1) and (H2) follow from Theorem~\ref{fisdef} applied to the $\X$-strong minimizer $\ox=\ox(\cdot)\in\X$. Furthermore, the continuous Fr\'echet differentiability
of the constraint mapping $f$ from \eqref{dataoc} is proved in Theorem~\ref{Fisdiff} under the assumptions imposed in (H4) on the $\X$-strong local minimizer $\ox(\cdot)$ of \eqref{bolza}.

To finish checking the assumptions of Theorem~\ref{dualnopc} with the data taken from \eqref{dataoc}, let us show that the imposed qualification condition in (H5) is equivalent to the metric
subregularity constraint qualification \eqref{mscq} for the reduced problem \eqref{op} with the same modulus $\kappa>0$. To proceed, pick $x(\cdot)\in \X$ and get the equalities
\begin{eqnarray*}&&\disp \dist\big(\dot{x}(\cdot) +  g(\cdot, x(\cdot));\Th_1\big)\\ &&=\inf \Big\{ \int_{0}^{T}\Big\|\;\dot{x}(t) + g(t , x(t)) - y(t)\|\,dt\Big| \; y(\cdot)\in
L^1([0,T];\R^n),\;y(t) \in\O_1\;\mbox{ a.e.}\; t \in [0, T]\Big\}\\&&=\inf \Big\{ \int_{0}^{T}\Big\|\;\dot{x}(t) + g(t , x(t)) - y(t)\|\,dt\Big| \; y(\cdot)\in L^1([0,T];\R^n),\;y(t)
\in\O_1\;\mbox{ for all }\; t \in [0, T]\Big\}\\ &&= \inf_{y(\cdot) \in L^1([0,T];\R^n)}\int_{0}^{T} \| \dot{x}(t) +  g(t , x(t)) - y(t) + \delta_{\O_1} (y(t))  \|\,dt \\ &&=\int_{0}^{T}
\inf_{y\in \R^n} \| \dot{x}(t) + g(t , x(t)) - y + \delta_{\O_1} (y)\|\,dt \\ &&=\int_{0}^{T}\inf_{y\in C_1}  \| \dot{x}(t) +  g(t , x(t)) - y \|\, dt = \int_{0}^{T}  \dist \big( \dot{x}(t) + g(t
, x(t));\O_1\big)\, dt, \disp \end{eqnarray*} where we interchange the integral and infimum signs by using \cite[Theorem~14.60]{rw}. This tells us that (H5) can be equivalently written as
\begin{eqnarray*} \disp \dist(x(\cdot);{\cal S}) & \leq &\kappa\int_{0}^{T} \dist\big(\dot{x}(t) + g(t, x(t));\O_1\big)\, dt + \kappa\,\dist\big((x(0) , x(T));\O_2) \\& \leq &\kappa\,\dist \big(
\dot{x}(\cdot) + g(\cdot , x(\cdot));\Th_1 \big) + \kappa\,\dist\big( (x(0) , x(T));\O_2\big)=\kappa \dist\big(f(x(\cdot));\Theta).\disp \end{eqnarray*}

Therefore, all the assumptions of Theorem~\ref{dualnopc} are satisfied for the nondynamic version \eqref{op} of the extended Bolza problem \eqref{bolzap}, and now we can apply to the local
minimizer $\ox=\ox(\cdot)$ of \eqref{op} the necessary optimality conditions \eqref{necessaryop} in the case where the data of \eqref{op} are given by \eqref{dataoc}. Note that the local
minimizer $\ox$ of \eqref{op} corresponds to the $\X$-strong local minimizer $\ox(\cdot)$ in the sense of \eqref{st-min}.

According to \eqref{necessaryop} in our setting, there exists a multiplier $\lm(\cdot)=(\mu(\cdot),s_1,s_2)\in \Y^* = L^{\infty}([0,T];\R^n)\times\R^n \times \R^n$ such that for all
$u(\cdot)\in\X$ we have \begin{equation}\label{bolzaeq3} \left\{\begin{matrix} \nabla J\big(\ox(\cdot)\big)(u(\cdot)\big)+\big\la\nabla f\big(\ox(\cdot)\big)\big(u(\cdot)\big)
,\big(\mu(\cdot),s_1,s_2)\big)\big\ra   = 0, \\\\ \mu(\cdot) \in N_{\Th_1}\big(\dot{\ox}(\cdot) + g(\cdot,\ox(\cdot))\big),\;\mbox{ and }\;(s_1 ,s_2) \in  N_{\O_2}\big(\ox(0) , \ox(T)\big), \\
\end{matrix}\right. \end{equation} where the second line follows from the application to the product set $\Th=\Th_1\times\O_2$ the normal cone product formula \begin{equation*}
N_\Th\big(y(\cdot),z\big)=N_{\Th_1}\big(y(\cdot)\big)\times N_{\O_2}(z)\;\mbox{ for all }\;y(\cdot)\in\Th_1\;\mbox{ and }\;z\in\O_2.\end{equation*} Using the calculation of the normal cone to
$\Th_1$ given in Theorem~\ref{nconecal} tells us that the condition $\mu(\cdot)\in N_{\Th_1}(\dot{\ox}(\cdot) + g(\cdot ,\ox(\cdot)))$ is equivalent to \begin{equation}\label{mu}\big\la \mu (t),
\dot{\ox}(t)+ g\big(t,\ox(t)\big)\big\ra\geq\la\mu (t),w\ra\;\mbox{ for all }\;w\in\O_1\;\mbox{ and a.e. }\;t\in [0,T]. \end{equation} Furthermore, using the calculations of the G\^ateaux
derivative of $J$ and the Fr\'echer derivative of $f$ obtained in Theorem~\ref{fisdef} and Theorem~\ref{Fisdiff}, respectively, allows us to rewrite the first line of \eqref{bolzaeq3} in the form
\begin{eqnarray*}&&\int_{0}^{T}\bigg(\Big\la\nabla_x\vt\big(t , \ox (t) , \dot{\ox}(t)\big) + \nabla_x g\big(t , \ox(t)\big)^{*}\mu (t),u(t)\Big\ra+\Big\la\nabla_v\Vt\big(t , \ox (t) ,
\dot{\ox}(t)\big) + \mu (t),\dot{u}(t)\Big\ra\bigg)dt \\&&\;+ \big\la u(0),s_1 +\nabla_{x_0}\ph_1\big(\ox(0),\ox(T)\big)\big\ra+\big\la u(T),s_2 +\nabla_{x_T}\ph\big(\ox(0),\ox(T)\big)\big\ra
=(0,0). \end{eqnarray*} Since the above relationships hold for all $u(\cdot)\in\X$, appealing to Lemma~\eqref{flemma} leads us to the Euler-Lagrange equation \eqref{euler-lag} together with the
endpoint conditions \begin{equation}\label{bolzaeq5} \left\{\begin{matrix} p(0) = s_1 + \nabla_{x_0}\ph\big(\ox(0),\ox(T)\big),  \\ \quad p(T) = - s_2 -\nabla_{x_T}\ph\big(\ox(0) , \ox(T)\big),
\end{matrix}\right. \end{equation} where $p : [0,T] \to \R^n $ is the unique absolutely continuous representative of the mapping $t\mapsto\nabla_v\Vt(t, \ox(t) , \dot{\ox}(t) ) + \mu (t)$.
Remembering that $(s_1, s_2) \in N_{\O_2} (\ox(0) , \ox(T))$, the transversality inclusion \eqref{transv} is implied by \eqref{bolzaeq5}. Observe furthermore that $$ \mu (t)= p(t) -
\nabla_{v}\Vt\big(t, \ox(t) , \dot{\ox}(t)\big)\;\mbox{ for a.e. }\;t \in [0,T]. $$ Replacing $\mu(\cdot)$ in \eqref{mu} by the latter expression, we arrive at the Weierstrass-Pontryagin
maximization condition \eqref{maxp} and thus complete the proof of the theorem. $\hfill\Box$

\begin{Remark}[\textbf{quantitative relationships in optimality conditions}]\label{quant} {\rm The necessary optimality conditions of Theorem~\ref{bolza}, while being based on the results of
Theorem~\ref{dualnopc} for nondynamic constrained optimization in normed spaces, do not explore the novel {\em quantitative} condition of the latter theorem that gives us an efficient estimate of
the multiplier $\lm$ in terms of the problem data of \eqref{op}. Our intention is to utilize this estimate in deriving explicit qualitative relationships of this type for the extended Bolza
problem \eqref{bolzap}. This seems to be important and implementable within our approach, but requires some technical work, which will be done in our future research.}\end{Remark}

Finally, we present an example that contains a simple class of
variational problems, where all the assumptions of
Theorem~\ref{bolza} are satisfied and the obtained necessary
optimality conditions are explicitly formulated.

\begin{Example}[\bf illustrating optimality conditions]{\rm We
consider the following problem of the calculus of variations:
\begin{eqnarray*} \disp &\mbox{minimize}& ~~ \quad \ph\big(x(0) ,
x(T)\big) + \int_{0}^{T}\Vt\big(t , x(t)\big)\,dt  \\ \nonumber
&\mbox{subject to}& ~~ \quad x(0) \in \Theta_1 , ~ x(T) \in \Th_2,\\
\nonumber &\quad & ~~ \quad x(\cdot) \in AC([0,T],\R^n), \disp
\end{eqnarray*} where $\ph$ and $\vt$ are continuously differentiable, thus they satisfy assumption (H1) and (H2) for all $ \ox \in AC([0,T],\R^n)$. Further, we assume $\Th_{1}$ and $\Theta_{2}$ are polyhedral convex sets. It is easy to see that the set of feasible solutions to this problem is also
polyhedral in $ AC([0,T],\R^n)$. Using an appropriate infinite-dimensional extension of the seminar Hoffman's lemma from \cite[Theorem~6a]{bt} tells us that assumption (H5) holds for $\O_{2}:= \Th_{1} \times \Th_{2}$. Then the
necessary optimality conditions of Theorem~\ref{bolza} ensure the
existence of a dual arc $p(\cdot) \in AC([0,T],\R^n)$
satisfying the following relationships:\\[1ex] The {\sc Euler-Lagrange equation} for a.e. $t\in[0,T]$: \begin{equation*}\label{euler-lag}\dot{p}(t) =\Vt_x \big(t,x(t)\big). \end{equation*}
The {\sc transversality inclusion}:
\begin{equation*}\label{transv}\big(p(0),-p(T)\big)\in\nabla\ph\big(
x(0), x(T)\big)+N_{\O_2}\big( x(0), x(T)\big). \end{equation*} The
Euler-Lagrange equation reads as $p(t) =\int_{0}^{t}\Vt_x \big(s ,
x(s)\big)\,ds + p(0) $ for all $t \in [0,T]$. Plugging there $t=T$,
we get the equality $$ p(0)- p(T)  = -\int_{0}^{T} \Vt_x \big(t , x(t)\big)\,dt .$$ Employing finally the transversality inclusion together with the fact that $N_{\O_2}\big(\ox(0), \ox(T) \big) = N_{\Th_1}\big(\ox(0)\big) \times N_{\Th_2}\big(\ox(T)\big)$
confirm that any minimizer $\ox(\cdot)$ of the above problem has to
satisfy the following explicit condition
$$\int_{0}^{T}\Vt_x
\big(t,\ox(t)\big)\,dt + \nabla_{x_0}\ph\big(\ox(0),\ox(T)\big)
+\nabla_{x_T}\ph\big( \ox(0),\ox(T)\big) \in -
N_{\Th_1}\big(\ox(0)\big) - N_{\Th_2}\big(\ox(T)\big)$$ which allows us to eliminate
nonoptimal solutions and eventually calculate local minimizers for
specified initial data of the problem under consideration.}
\end{Example}


\section{Conclusions}\label{concl} This paper develops a new approach to study problems of dynamic optimization in generally incomplete spaces by reducing them to nondynamic problems of
constrained optimization in normed spaces with subsequent applications of the refined necessary optimality conditions recently obtained \cite{mm}. We implement here this approach to deriving
first-order necessary optimality conditions in the extended Bolza problem of the calculus of variations with pointwise velocity constraints depending on current state positions. In contrast to
the previously used methods of modern variational analysis that require the completeness of spaces of feasible solutions, we are now able to deal with both complete space frameworks as well as
with incomplete spaces of type ${\cal C}^k$ for $k=1,2,\ldots$. Investigating strong local minimizers in the spaces under consideration, we confined ourselves for simplicity to variational
problems with smooth data although suitable constructions of generalized differentiation in normed spaces are used even in such settings. Proceeding in this way, we derive necessary optimality
conditions for $\X$-strong local minimizers of the extended Bolza problem in generally incomplete spaces that contain the appropriate Euler-Lagrange, Weierstrass-Pontryagin, and transversality
relations.

In our future research, we plan to develop this approach with covering nonsmooth and nonconvex extended Bolza problems as well as optimal control problems for constrained differential inclusions.
One of the important novel features of this approach is the possibility to obtain some {\em quantitative estimates} for adjoint functions as discussed in Remark~\ref{quant}. We also plan to
implement this approach to deriving {\em second-order} optimality conditions for variational problems by extending to infinite dimensions and further developing the recent results of second-order
variational analysis achieved in \cite{mms,mms1}.

\small  \end{document}